\newtheorem{theorem}{{\sc Theorem}}[section]
\newtheorem{lemma}[theorem]{{\sc Lemma}}
\def\XXint#1#2#3{{\setbox0=\hbox{$#1{#2#3}{\int}$ }
\vcenter{\hbox{$#2#3$ }}\kern-.6\wd0}}
\newcommand{\Gk}{\kappa}
\newcommand{\Gth}{\theta}
\bmdefine\BGa{\alpha}
\bmdefine\BGb{\beta}
\bmdefine\BGd{\delta}
\bmdefine\BGe{\epsilon}
\bmdefine\BGve{\varepsilon}
\bmdefine\BGf{\phi}
\bmdefine\BGvf{\varphi}
\bmdefine\BGg{\gamma}
\bmdefine\BGc{\chi}
\bmdefine\BGi{\iota}
\bmdefine\BGk{\kappa}
\bmdefine\BGl{\lambda}
\bmdefine\BGn{\eta}
\bmdefine\BGm{\mu}
\bmdefine\BGv{\nu}
\bmdefine\BGp{\pi}
\bmdefine\BGth{\theta}
\bmdefine\BGvth{\vartheta}
\bmdefine\BGr{\rho}
\bmdefine\BGvr{\varrho}
\bmdefine\BGs{\sigma}
\bmdefine\BGvs{\varsigma}
\bmdefine\BGt{\tau}
\bmdefine\BGj{\tau}
\bmdefine\BGu{\upsilon}
\bmdefine\BGo{\omega}
\bmdefine\BGx{\xi}
\bmdefine\BGy{\psi}
\bmdefine\BGz{\zeta}
\bmdefine\BGD{\Delta}
\bmdefine\BGF{\Phi}
\bmdefine\BGG{\Gamma}
\bmdefine\BGL{\Lambda}
\bmdefine\BGP{\Pi}
\bmdefine\BGT{\Theta}
\bmdefine\BGS{\Sigma}
\bmdefine\BGU{\Upsilon}
\bmdefine\BGO{\Omega}
\bmdefine\BGX{\Xi}
\bmdefine\BGY{\Psi}
\bmdefine\BCA{{\mathcal A}}
\bmdefine\BCB{{\mathcal B}}
\bmdefine\BCC{{\mathcal C}}
\bmdefine\BCD{{\mathcal D}}
\bmdefine\BCE{{\mathcal E}}
\bmdefine\BCF{{\mathcal F}}
\bmdefine\BCG{{\mathcal G}}
\bmdefine\BCH{{\mathcal H}}
\bmdefine\BCI{{\mathcal I}}
\bmdefine\BCJ{{\mathcal J}}
\bmdefine\BCK{{\mathcal K}}
\bmdefine\BCL{{\mathcal L}}
\bmdefine\BCM{{\mathcal M}}
\bmdefine\BCN{{\mathcal N}}
\bmdefine\BCO{{\mathcal O}}
\bmdefine\BCP{{\mathcal P}}
\bmdefine\BCQ{{\mathcal Q}}
\bmdefine\BCR{{\mathcal R}}
\bmdefine\BCS{{\mathcal S}}
\bmdefine\BCT{{\mathcal T}}
\bmdefine\BCU{{\mathcal U}}
\bmdefine\BCV{{\mathcal V}}
\bmdefine\BCW{{\mathcal W}}
\bmdefine\BCX{{\mathcal X}}
\bmdefine\BCY{{\mathcal Y}}
\bmdefine\BCZ{{\mathcal Z}}
\bmdefine\Bzr{ 0}
\bmdefine\Ba{ a}
\bmdefine\Bb{ b}
\bmdefine\Bc{ c}
\bmdefine\Bd{ d}
\bmdefine\Be{ e}
\bmdefine\Bf{ f}
\bmdefine\Bg{ g}
\bmdefine\Bh{ h}
\bmdefine\Bi{ i}
\bmdefine\Bj{ j}
\bmdefine\Bk{ k}
\bmdefine\Bl{ l}
\bmdefine\Bm{ m}
\bmdefine\Bn{ n}
\bmdefine\Bo{ o}
\bmdefine\Bp{ p}
\bmdefine\Bq{ q}
\bmdefine\Br{ r}
\bmdefine\Bs{ s}
\bmdefine\Bt{ t}
\bmdefine\Bu{ u}
\bmdefine\Bv{ v}
\bmdefine\Bw{ w}
\bmdefine\Bx{ x}
\bmdefine\By{ y}
\bmdefine\Bz{ z}
\bmdefine\BA{ A}
\bmdefine\BB{ B}
\bmdefine\BC{ C}
\bmdefine\BD{ D}
\bmdefine\BE{ E}
\bmdefine\BF{ F}
\bmdefine\BG{ G}
\bmdefine\BH{ H}
\bmdefine\BI{ I}
\bmdefine\BJ{ J}
\bmdefine\BK{ K}
\bmdefine\BL{ L}
\bmdefine\BM{ M}
\bmdefine\BN{ N}
\bmdefine\BO{ O}
\bmdefine\BP{ P}
\bmdefine\BQ{ Q}
\bmdefine\BR{ R}
\bmdefine\BS{ S}
\bmdefine\BT{ T}
\bmdefine\BU{ U}
\bmdefine\BV{ V}
\bmdefine\BW{ W}
\bmdefine\BX{ X}
\bmdefine\BY{ Y}
\bmdefine\BZ{ Z}
\date{}
\begin{document}
\title{The sharp $L^p$ Korn interpolation and second inequalities in thin domains}
\author{D. Harutyunyan}
\maketitle

\begin{abstract}
In the present paper we extend the $L^2$ Korn interpolation and second inequalities in thin domains, proven in [\ref{bib:Harutyunyan.3}], to the space $L^p$ for any $1<p<\infty.$ A thin domain in space is roughly speaking a shell with non-constant thickness around a smooth enough two dimensional surface. The inequality that we prove in $L^p$ holds for practically any thin domain $\Omega\subset\mathbb R^3$ and any vector field $\Bu\in W^{1,p}(\Omega).$ The constants in the estimate are asymptotically optimal in terms of the domain thickness $h.$ This in particular solves the problem of finding the asymptotics of the optimal constant in the classical Korn second inequality in $L^p$ for thin domains in terms of the domain thickness in almost full generality. The remarkable fact is that the interpolation inequality reduces the problem of estimating the gradient $\nabla\Bu$ in terms of the strain $e(\Bu)$ to the easier problem of estimating only the vector field $\Bu$, which is a Korn-Poincar\'e inequality.
\end{abstract}

\section{Introduction}
\label{sec:1}
Assume $h>0$ is a small parameter and assume $S\subset \mathbb R^3$ is a connected and compact $C^3$ surface with a unit normal $\Bn(x)$\footnote{The surface $S$ does not have to be orientable.} for $x\in S.$ While a shell of thickness $h$ is the $h/2$ neighborhood of $S$ in the normal direction, i.e., it is given by $\Omega=\{x+t\Bn(x) \ : \ x\in S,\ t\in (-h/2,h/2)\},$ where the surface $S$ is called the mid-surface of the shell    
$\Omega,$ a family of thin domains $\Omega^h$ with thickness of order $h$ is defined in terms of Lipschitz functions $g_1^h(x),g_2^h(x)\colon S\to (0,\infty)$,
as follows: 
\begin{equation}
\label{1.1}
\Omega^h=\{x+t\Bn(x) \ : \ x\in S,\ t\in (-g_1^h(x),g_2^h(x))\},
\end{equation}
where the functions $g_1^h$ and $g_2^h$ are assumed to satisfy the uniform conditions
\begin{equation}
\label{1.2}
h\leq g_1^h(x),g_2^h(x)\leq c_1 h,\quad \text{and}\quad |\nabla g_1^h(x)|+|\nabla g_2^h(x)|\leq c_2h,\quad\text{for all}\quad x\in S,
\end{equation}
to ensure that the thickness of $\Omega^h$ is of order $h$ and does not have rapid oscillations as $h\to 0.$ The problem of determining rigidity of thin domains is more than a century old in nonlinear elasticity. The problem has been solved for plates only\footnote{Or for shells that have a flat part} by Friesecke, James and M\"uller  [\ref{bib:Fri.Jam.Mue.1},\ref{bib:Fri.Jam.Mue.2}]. The term "rigidity" here is the geometric rigidity of a thin domain, which is defined in terms of the geometric rigidity estimate of Friesecke, James and M\"uller, that reads as follows: \textit{Assume $\Omega\subset\mathbb R^3$ is open bounded connected and Lipschitz. Then there exists a constant $C_I=C_I(\Omega),$ such that for every vector field $\Bu\in H^1(\Omega),$ there exists a constant proper rotation $\BR\in SO(3)$, such that}
\begin{equation}
\label{1.3}
\|\nabla\Bu-\BR\|_{L^2(\Omega)}^2\leq C_{I}\int_\Omega\mathrm{dist}^2(\nabla\Bu(x),SO(3))dx.
\end{equation}
If $\Omega$ is a thin domain, then the optimal value of the constant $C_I$ in (\ref{1.3}) typically has the asymptotic form $C_I=c_Ih^{-\alpha},$ where 
$\alpha\geq 0$ and the constant $c_I>0$ depends only on the mid-surface $S$ and the Lipschitz characters of the surrounding faces $g_1^h$ and $g_2^h.$
The value\footnote{The question of whether such a value exists is generally open.} of $\alpha$ then identifies the rigidity of $\Omega.$ Depending on the problem under consideration the vector field $\Bu\in H^1(\Omega)$ may or may not satisfy boundary or normalization conditions. In the case when $\Bu$ is not required to satisfy any additional conditions other than the integrability $\Bu\in H^1(\Omega),$ one typically has for the exponent $\alpha>0,$ i.e., the constant $C_I$ indeed blows up in the vanishing thickness limit [\ref{bib:Fri.Jam.Mue.1},\ref{bib:Fri.Jam.Mue.2}], in particular one has $\alpha=2$ for plates. One can always rotate the field $\Bu,$ thus assume without loss of generality that $\BR=\BI$ in (\ref{1.3}). The linearization of (\ref{1.3}) around the identity is Korn's first inequality without boundary conditions [\ref{bib:Korn.1},\ref{bib:Korn.2},\ref{bib:Kon.Ole.1},\ref{bib:Kohn.1}] which reads as follows: \textit{Assume $\Omega\subset\mathbb R^3$ is open bounded connected and Lipschitz. Then there exists a constant $C_{II}=C(\Omega),$ depending only on $\Omega,$ such that for every vector field $\Bu\in H^1(\Omega)$ there exists a skew-symmetric matrix $\BA\in \mathbb R^{3\times 3,}$ i.e., $\BA+\BA^T=0,$ such that
\begin{equation}
\label{1.4}
\|\nabla\Bu-\BA\|_{L^2(\Omega)}^2\leq C_{II}\|e(\Bu)\|_{L^2(\Omega)}^2,
\end{equation}
where $e(\Bu)=\frac{1}{2}(\nabla\Bu+\nabla\Bu^T)$ is the symmetrized gradient (the strain in linear elasticity).} It is a well-known fact that one can pass from (\ref{1.3}) to (\ref{1.4}) and vice versa\footnote{However the current techniques for the passage from (\ref{1.4}) to (\ref{1.3}) rely on local truncation of $\Bu$ and do not preserve the asymptotics of the constant $C_{II}.$} If the field $\Bu$ is prescribed on the thin face of the $\Omega,$ then the asymptotics of 
$C_{II}$ is known 
[\ref{bib:Gra.Har.1},\ref{bib:Gra.Har.2},\ref{bib:Gra.Har.3},\ref{bib:Harutyunyan.2}]. 
Traditionally (\ref{1.4}) is proven by means of Korn's second inequality [\ref{bib:Korn.1},\ref{bib:Korn.2},\ref{bib:Kon.Ole.2}], which imposes no boundary or normalization condition on the vector field $\Bu\in H^1(\Omega)$ and reads a follows: \textit{Assume $\Omega\subset\mathbb R^n$ is open bounded connected and Lipschitz. Then there exists a constant $C_2=C_2(\Omega),$ depending only on $\Omega,$ such that for every vector field $\Bu\in H^1(\Omega)$ there holds:}
\begin{equation}
\label{1.5}
\|\nabla\Bu\|_{L^2(\Omega)}^2\leq C_2(\|\Bu\|_{L^2(\Omega)}^2+\|e(\Bu)\|_{L^2(\Omega)}^2).
\end{equation}
A new inequality, called Korn's first-and-a-half inequality (later renamed Korn's interpolation inequality as it interpolates between Korn's first and second inequalities) was introduced in [\ref{bib:Gra.Har.1}] and employed in [\ref{bib:Gra.Har.3},\ref{bib:Harutyunyan.2}] to study the inequality (\ref{1.4}) for shells. 
An asymptotically sharp version of the interpolation estimate [\ref{bib:Harutyunyan.3}, Theroem~3.1] was recently proven by the author for practically any thin domains $\Omega$ and any vector fields $\Bu\in H^1(\Omega),$ see also [\ref{bib:Harutyunyan.4}]. The new interpolation inequality solves two problems: 1. It is stronger then Korn's second inequality and solves the problem of finding the asymptotics of the constant $C_2$ in (\ref{1.5}), yielding $C_2=ch^{-1}$ for thin domains $\Omega.$ 2. It reduces the problem of proving (\ref{1.4}) to proving a Korn-Poincar\'e estimate on the vector field $\Bu$ with $e(\Bu)$ in place of $\nabla\Bu.$ In the present work we extend the interpolation estimate to the space $L^p$ for any $1<p<\infty.$ We do not adopt the classical strategy of proving $L^p$ estimates out of the $L^2$ ones for elliptic operators by proving a weak type $L^1$ estimate and doing Marcinkewich interpolation, but we do it directly by redoing the proof in [\ref{bib:Harutyunyan.3},\ref{bib:Harutyunyan.4}] and modifying where necessary. Let us mention that while some of the modifications are trivial, some are highly not and require new ideas and tricks. The main difficulties occur in two key Lemmas 4.2 and 5.1. The first difficulty for instance occurs when one tries to prove the analogue of the main Lemma~4.2 in [\ref{bib:Harutyunyan.3}] in $L^p$ as the self-duality of $L^2$ has been heavily used in the original proof. In particular, in many steps it was essential that the functions under consideration belong to $L^2$ as throughout the proof of Lemma~4.2 in [\ref{bib:Harutyunyan.3}], a PDE was multiplied by a test function from the same function space and the result was integrated by parts, which certainlty does not work for $p\neq 2,$ as well as a modification of test functions (to make them belong to the same space $L^p$) does not seem to work. We will point the main issues out when proving an analogues estimate, or will present a sketch of proof, referring to [\ref{bib:Harutyunyan.3}] for details if the same proof works.

\section{Notation}
\setcounter{equation}{0}
\label{sec:2}

In this section we recall the main notation and definitions, from [\ref{bib:Harutyunyan.3}]. Going back to the setting (\ref{1.1})-(\ref{1.2}), we assume that the mid-surface $S\subset\mathbb R^3$ is connected, compact, regular and of class $C^3$ up to its boundary. Another technical assumption that we make is that locally, and up to boundary, $S$ has a parametrization by means of the principal variables $\Gth$ and $z.$ From the compactness of $S$ we can then extract a finite atlas of patches $S\subset\cup_{i=1}^m\Sigma_i$ such that each patch $\Sigma_i$ can be parametrized by the principal variables $z$ and $\Gth$ ($z=$constant and $\Gth=$constant are the principal lines on $\Sigma_i$) that change in the ranges $z\in [z_i^1(\Gth),z_i^2(\Gth)]$ for $\Gth\in [0,\omega_i],$ where $\omega_i>0$ for $i=1,2,\dots,m.$ 
We also assume that the functions $z_i^1(\Gth)$ and $z_i^2(\Gth)$ satisfy the conditions
\begin{equation}
\label{2.1}
\min_{1\leq i\leq m}\inf_{\Gth\in [0,\omega_i]}[z_i^2(\Gth)-z_i^1(\Gth)]=l>0,
\end{equation}
which roughly speaking means that each patch does not have an infinitesimally sharp edge in the principal directions. Since there will be no condition imposed on the vector field $\Bu\in H^1(\Omega),$ (see Theorem~\ref{th:3.1}), we can prove the interpolation inequality locally (over a single patch) and then sum the obtained estimates in $i=1,2,\dots,m.$ Assume therefore in what follows that $S$ is a single patch parametrized as $\Br=\Br(\Gth,z)$ in the principal variables with the new notation $\omega:=\omega_1.$ Denote the metric $A_{z}=\left|\frac{\partial \Br}{\partial z}\right|, A_{\Gth}=\left|\frac{\partial \Br}{\partial\Gth}\right|$ on $S$ and let $\Gk_{z}$ and $\Gk_{\Gth}$ be the two principal curvatures. In what follows we will mainly use the notation $f_{,\alpha}$ for the partial derivative $\frac{\partial f}{\partial\alpha}$ inside gradient matrices. Also, for the normal to $S$ direction $\Bn$ we will use the variable $t.$ Thus we have for any $\Bu=(u_t,u_\Gth,u_z)\in H^1(\Omega,\mathbb R^3)$ the formula (\ref{bib:Tov.Smi.})
\begin{equation}
\label{2.2}
\nabla\Bu=
\begin{bmatrix}
  u_{t,t} & \dfrac{u_{t,\Gth}-A_{\Gth}\Gk_{\Gth}u_{\Gth}}{A_{\Gth}(1+t\Gk_{\Gth})} &
\dfrac{u_{t,z}-A_{z}\Gk_{z}u_{z}}{A_{z}(1+t\Gk_{z})}\\[3ex]
u_{\Gth,t}  &
\dfrac{A_{z}u_{\Gth,\Gth}+A_{z}A_{\Gth}\Gk_{\Gth}u_{t}+A_{\Gth,z}u_{z}}{A_{z}A_{\Gth}(1+t\Gk_{\Gth})} &
\dfrac{A_{\Gth}u_{\Gth,z}-A_{z,\Gth}u_{z}}{A_{z}A_{\Gth}(1+t\Gk_{z})}\\[3ex]
u_{ z,t}  & \dfrac{A_{z}u_{z,\Gth}-A_{\Gth,z}u_{\Gth}}{A_{z}A_{\Gth}(1+t\Gk_{\Gth})} &
\dfrac{A_{\Gth}u_{z,z}+A_{z}A_{\Gth}\Gk_{z}u_{t}+A_{z,\Gth}u_{\Gth}}{A_{z}A_{\Gth}(1+t\Gk_{z})}
\end{bmatrix},
\end{equation}
in the orthonormal local basis $(\Bn,\Be_\Gth,\Be_z).$ It is convenient to prove the estimates for the gradient restricted to the mid-surface denoted by $\BF$ (which is obtained from (\ref{2.2}) by omitting the small terms $t\Gk_\Gth$ and $t\Gk_z$ in the denominators of the second and third columns of $\nabla\Bu$),
and then pass from $\BF$ to the actual gradient $\nabla\Bu$ utilizing the obvious bounds
$\|e(\BF)-e(\Bu)\|_{L^p(\Omega)}\leq \|\BF-\nabla\Bu\|_{L^p(\Omega)}\leq 2\min(\|\BF\|_{L^p(\Omega)},\|\nabla\Bu\|_{L^p(\Omega)}).$

\section{Main results}
\label{sec:3}
\setcounter{equation}{0}
Before formulating the main theorem, let us introduce the domain mid-surface and thickness parameters, which are the quantities 
$\omega,l,L,Z,a,A,c_1,c_2$ and $k,$ where $\omega$ is defined in the previous section, $l$ is defined in (\ref{2.1}) for the situation of one patch ($m=1$), and  
\begin{align}
\label{3.1}
&a:=\min_{D}(A_\Gth,A_z)>0, \quad A:=\|A_\Gth\|_{W^{2,\infty}(D)}+\|A_z\|_{W^{2,\infty}(D)}<\infty,\\ \nonumber
&k:=\|\Gk_\Gth\|_{W^{1,\infty}(D)}+\|\Gk_z\|_{W^{1,\infty}(D)}<\infty,\\ \nonumber
&L:=\max_{\Gth\in [0,\omega]}[z^2(\Gth)-z^1(\Gth)]<\infty,
\quad Z:=\left(\|z^1\|_{W^{1,\infty}[0,\omega]}+\|z^2\|_{W^{1,\infty}[0,\omega]}\right)<\infty,
\end{align}
where we are assuming there is only one parametrization patch with $z^1=z_1^1,$ $z^2=z_1^2$ and $D=\{(\Gth,z)\ : \ \Gth\in [0,\omega], z\in[z^1(\Gth),z^2(\Gth)]\}.$ In what follows the constants $h_0>0$ and $C>0$ will depend only on the exponent $p$ and the domain mid-surface and thickness parameters. We will use the notation $\|f\|_p$ for the $L^p$ norm skipping the domain of consideration whenever it creates no ambiguity. The following theorem is the Korn interpolation inequality in $L^p.$ 
\begin{theorem}[Korn's interpolation inequality in $L^p$]
\label{th:3.1}
Let $1<p<\infty$ and assume conditions (\ref{2.1}) and (\ref{3.1}) hold. Then there exists constants $h_0,C>0,$ such that Korn's interpolation inequality holds:
\begin{equation}
 \label{3.2}
\|\nabla\Bu\|_p^2\leq C\left(\frac{\|u_t\|_p\cdot\|e(\Bu)\|_p}{h}+\|\Bu\|_p^2+\|e(\Bu)\|_p^2\right),
\end{equation}
for all $h\in(0,h_0)$ and $\Bu=(u_t,u_\Gth,u_z)\in W^{1,p}(\Omega),$ where $\Bn$ is the unit normal to the mid-surface $S.$ Moreover, the exponent of $h$ in the inequality (\ref{3.2}) is optimal for any thin domain $\Omega$ satisfying (\ref{2.1}) and (\ref{3.1}), i.e., there exists a displacement $\Bu\in W^{1,p}(\Omega,\mathbb R^3)$ realizing the asymptotics of $h$ in (\ref{3.2}).
\end{theorem}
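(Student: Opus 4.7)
The plan is to follow the architecture of the $L^2$ proof in [\ref{bib:Harutyunyan.3}] and replace each step by its $L^p$ counterpart. Since no boundary or normalization condition is imposed on $\Bu$, the estimate is additive over the finite atlas $\cup_{i=1}^m\Sigma_i$, so I reduce to a single principal-coordinate patch. Next I work with the mid-surface approximation $\BF$ instead of $\nabla\Bu$, using the bounds recorded at the end of Section~\ref{sec:2} together with the fact that the entrywise difference between $\BF$ and $\nabla\Bu$ is of order $h$ (the denominators differ only by $t\Gk_\Gth$ and $t\Gk_z$), so it can be absorbed into the left-hand side once $h$ is small enough. In the principal basis the diagonal entries of $\BF$ coincide, up to curvature terms of order $\|\Bu\|_p$, with entries of $e(\Bu)$, and the symmetric off-diagonal entries are themselves sums of two entries of $e(\Bu)$. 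The whole task therefore collapses to controlling in $L^p(\Omega)$ the single skew in-plane quantity
\[
R := \frac{A_\Gth u_{\Gth,z} - A_z u_{z,\Gth}}{A_z A_\Gth} + (\text{curvature-weighted terms linear in } \Bu),
\]
which measures the rotation of the field around the mid-surface normal.

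Second, I derive a PDE for $R$ in the tangential variables $(\Gth, z)$. Equating mixed partials $\partial_z\partial_\Gth u_\Gth = \partial_\Gth\partial_z u_\Gth$ and $\partial_z\partial_\Gth u_z = \partial_\Gth\partial_z u_z$ turns both $\partial_z R$ and $\partial_\Gth R$ into linear combinations of tangential derivatives of entries of $e(\Bu)$ plus curvature-weighted terms of order $\|\Bu\|_p + \|u_t\|_p$. This is the $L^p$ analog of Lemma~4.2 in [\ref{bib:Harutyunyan.3}]. The $L^2$ argument tested this PDE against $R$ itself, integrating by parts and exploiting self-duality; in $L^p$ I instead use the nonlinear test function $|R|^{p-2}R$ (mollified in $t$ to secure enough Sobolev regularity), integrate by parts in $(\Gth, z)$, and split the resulting products by H\"older with conjugate exponents $(p, p')$. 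Using $\nabla(|R|^{p-2}R) = (p-1)|R|^{p-2}\nabla R$ together with Young's inequality $ab \leq \varepsilon a^{p} + C_\varepsilon b^{p'}$, the $\|\nabla R\|_p$ contributions are absorbed back into the left-hand side, leaving a pure estimate of $\|R\|_p^p$ in terms of $\|e(\Bu)\|_p$ and $\|\Bu\|_p$.

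Third, to produce the $h^{-1}$ factor (the $L^p$ analog of Lemma~5.1 in [\ref{bib:Harutyunyan.3}]) I integrate the slicewise estimate in the thin direction $t \in (-g_1^h, g_2^h)$. The normal component $u_t$ enters only after a further integration by parts in $t$: cross-differentiations generate $\partial_t$-derivatives that pair with $u_t$, and integration across the thickness of order $h$ produces the factor $h^{-1}\|u_t\|_p\|e(\Bu)\|_p$ after a final application of Young's inequality in the conjugate pair $(p,p')$. Summing over patches and passing from $\BF$ back to $\nabla\Bu$ delivers (\ref{3.2}). The sharpness of the exponent of $h$ is obtained by exhibiting an explicit test displacement in the spirit of the Kirchhoff--Love ansatz $\Bu = v(\Gth,z)\Bn - (t/A_\Gth)\partial_\Gth v\,\Be_\Gth - (t/A_z)\partial_z v\,\Be_z$, for which a direct computation in (\ref{2.2}) shows that the two sides of (\ref{3.2}) scale by the same power of $h$.

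The principal obstacle is the $L^p$-analog of Lemma~4.2. The $L^2$ argument relied on testing the PDE against $R$ itself, an operation inert when $p\neq 2$; the nonlinear test function $|R|^{p-2}R$ introduces genuine nonlinearity, so a direct translation of the original integration by parts produces new terms whose control requires fresh ideas. In particular, the mollification parameter must be coupled to $h$ so that regularization errors remain compatible with the thin-domain scaling, and every boundary contribution from the lateral faces of $\Omega$ must be re-estimated in $L^{p'}$ using only quantities available on the right-hand side of (\ref{3.2}); it is here that, as the author warns, merely modifying the $L^2$ test function is insufficient and a genuinely new argument is needed.
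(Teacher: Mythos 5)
Your architecture (single patch, work with $\BF$, combine block-by-block estimates, localize in $t$) matches the paper's, but the heart of your proposal --- the replacement of the $L^2$ duality argument by testing the PDE against $|R|^{p-2}R$ --- is precisely the road the author explicitly flags as a dead end in the Introduction: ``a modification of test functions (to make them belong to the same space $L^p$) does not seem to work.'' The obstruction is concrete. Testing $\Delta w=0$ against $|w|^{p-2}w\varphi^p$ and integrating by parts yields the $p$-energy quantity $\int|w|^{p-2}|\nabla w|^2\varphi^p$, not $\|\nabla w\|_p^p$, and the Young step you invoke, $ab\leq\varepsilon a^p+C_\varepsilon b^{p'}$, cannot turn that into an absorbable $\|\nabla R\|_p^p$ term: the weight $|R|^{p-2}$ sits on $|\nabla R|^2$ with the ``wrong'' exponent $2$, and Young with conjugate exponents $(p,p')$ produces powers like $|R|^{(p-2)p'}$ which equal $|R|^p$ only if $p=2$. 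So the scheme does not close for general $p$.

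What the paper actually does in Lemma~\ref{lem:4.2} is orthogonal to a test-function computation. The self-duality step is replaced by the one-dimensional Gagliardo--Nirenberg interpolation $\|\partial_y(\varphi w)\|_p^2\leq C\|\varphi w\|_p\,\|\partial_y^2(\varphi w)\|_p$ (inequality (\ref{4.10})), applied slicewise in $x$ and then integrated; the resulting $\|\partial_y^2 w\|_p$ term is controlled by the $L^p$ Caccioppoli inequality for harmonic functions on balls of radius $\sim h$, which is exactly where the $h^{-1}$ factor enters (see (\ref{4.13})--(\ref{4.15})). Your proposal contains no mechanism that produces this sharp $h^{-p}$ Caccioppoli gain. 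Similarly, at the level of Lemma~\ref{lem:5.1} the paper splits $u=\tilde u+(u-\tilde u)$ into a harmonic part and a residual solving a divergence-form problem, and controls the residual with Meyers' $L^p$ elliptic estimate --- a step your sketch omits entirely, though without it you cannot justify applying the two-dimensional harmonic-function lemmas to the actual displacement components. Finally, the sharpness ansatz (\ref{6.1}) is a Kirchhoff--Love field with a $\sqrt h$ boundary layer in $\Gth$; a generic smooth $v(\Gth,z)$ without that rescaling will not saturate the $h^{-1}$ exponent on a generic thin domain, so your proposed test displacement is insufficient as stated.
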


The next theorem, that is a consequence of Theorem~\ref{th:3.1}, provides a sharp Korn's second inequality for thin domains.

\begin{theorem}[Korn's second inequality in $L^p$]
\label{th:3.2}
Let $1<p<\infty$ and assume conditions (\ref{2.1}) and (\ref{3.1}) hold. There exists constants $h_0,C>0,$ such that Korn's second inequality holds:
\begin{equation}
  \label{3.3}
\|\nabla\Bu\|_p^2\leq \frac{C}{h}(\|\Bu\|_p^2+\|e(\Bu)\|_p^2),
\end{equation}
for all $h\in(0,h_0)$ and $\Bu=(u_t,u_\Gth,u_z)\in W^{1,p}(\Omega).$ Moreover, the exponent of $h$ in the inequality (\ref{3.3}) is optimal for any thin domain $\Omega$ satisfying (\ref{2.1}) and (\ref{3.1}), i.e., there exists a displacement $\Bu\in W^{1,p}(\Omega,\mathbb R^3)$ realizing the asymptotics of $h$ in (\ref{3.3}).
\end{theorem}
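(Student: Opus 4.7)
The main inequality (\ref{3.3}) will follow directly from Theorem~\ref{th:3.1} by absorbing the mixed interpolation term into a sum of squares, and the sharpness of the $h^{-1}$ rate will be obtained by exhibiting a suitable family of test displacements.

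For the upper bound, I would apply Young's inequality $ab\leq\tfrac12(a^2+b^2)$ to the mixed term in (\ref{3.2}), choosing the natural scaling $a=\|u_t\|_p/\sqrt h$ and $b=\|e(\Bu)\|_p/\sqrt h$, to obtain
\[
\frac{\|u_t\|_p\,\|e(\Bu)\|_p}{h} \;\leq\; \frac{1}{2h}\bigl(\|u_t\|_p^2+\|e(\Bu)\|_p^2\bigr) \;\leq\; \frac{1}{2h}\bigl(\|\Bu\|_p^2+\|e(\Bu)\|_p^2\bigr),
\]
where the last inequality uses the pointwise bound $|u_t|\leq|\Bu|$. Plugging this into (\ref{3.2}) and dominating the remaining $\|\Bu\|_p^2+\|e(\Bu)\|_p^2$ by $h^{-1}(\|\Bu\|_p^2+\|e(\Bu)\|_p^2)$, valid once $h_0$ is chosen below $1$, collapses all three terms on the right-hand side of (\ref{3.2}) into a single multiple of $h^{-1}(\|\Bu\|_p^2+\|e(\Bu)\|_p^2)$, which is (\ref{3.3}) with a constant of the same order as that of Theorem~\ref{th:3.1}. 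This half of the argument is entirely soft and requires no further analysis once the interpolation estimate is available.

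For sharpness, the plan is to reuse the optimal family $\Bu^h$ produced in the second half of Theorem~\ref{th:3.1} and to verify that the Young's inequality step above is tight along this family. Concretely this amounts to checking the balance $\|u_t^h\|_p\sim\|\Bu^h\|_p$ and $\|u_t^h\|_p\sim\|e(\Bu^h)\|_p$, so that the geometric-mean term and the sum-of-squares term on the right-hand side of (\ref{3.2}) have the same order; one then obtains
\[
\|\nabla\Bu^h\|_p^2\;\gtrsim\;\frac{\|u_t^h\|_p\,\|e(\Bu^h)\|_p}{h}\;\gtrsim\;\frac{1}{h}\bigl(\|\Bu^h\|_p^2+\|e(\Bu^h)\|_p^2\bigr),
\]
and the exponent of $h$ in (\ref{3.3}) is saturated. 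The natural candidates are displacements on $\Omega$ with a dominant normal component and symmetric gradient smaller than the full gradient by exactly the factor $\sqrt h$; on a single patch these can be constructed explicitly from the principal-coordinate formula (\ref{2.2}).

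The main obstacle is therefore not the deduction, which is essentially one application of Young's inequality, but the production of a displacement that simultaneously realises the asymptotics in (\ref{3.2}) \emph{and} has $\|u_t^h\|_p$, $\|\Bu^h\|_p$ and $\|e(\Bu^h)\|_p$ properly balanced. A family optimal for (\ref{3.2}) could in principle have $\|u_t^h\|_p\ll\|\Bu^h\|_p$ and thereby fail to saturate the Young step, so the construction must be controlled explicitly to keep the normal component dominant; once this is arranged, the lower bound witnessing sharpness follows from a direct scaling count in (\ref{2.2}).
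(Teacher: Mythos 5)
Your argument is correct and follows the same route as the paper: the paper presents Theorem~\ref{th:3.2} as an immediate consequence of Theorem~\ref{th:3.1} (the passage you carry out explicitly is Young's inequality on the cross term together with $\|u_t\|_p\le\|\Bu\|_p$), and the paper verifies sharpness of both (\ref{3.2}) and (\ref{3.3}) with the single Ansatz (\ref{6.1}). Your caveat in the final paragraph is well taken in principle, but in fact the Ansatz (\ref{6.1}) already has the balance you require, namely $\|u_t^h\|_p\sim\|\Bu^h\|_p\sim\|e(\Bu^h)\|_p$ with $\|\nabla\Bu^h\|_p\sim h^{-1/2}\|u_t^h\|_p$, so the Young step is saturated and both estimates are realized by the same family; no separate construction is needed.
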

An important remark is as follows. It has been proven in [\ref{bib:Gra.Har.3},\ref{bib:Harutyunyan.2}], that in the presence of zero Dirichlet boundary conditions on the displacement field $\Bu$ on the thin part of the boundary $\partial\Omega,$ the scaling of the optimal (in terms of the asymptotics in the thickness $h$)
constant $C_{II}$ in (\ref{1.4}) depends only on the principal curvatures, (and in fact Gaussian curvature if it has a constant sign) of the domain mid-surface $S.$ Thus we consider below 
the main domain-curvature situations below.
\begin{itemize}
\item[1.] Parabolic surfaces: $\Gk_\Gth=0$ and $|\Gk_z|>0$ on $S.$ This includes cylindrical and conical surfaces with convex cross-sections.
\item[2.] Elliptic surfaces: $\Gk_\Gth\Gk_z>0$ on $S.$   
\item[3.] Hyperbolic surfaces: $\Gk_\Gth\Gk_z<0$ on $S.$
\end{itemize}

What follows below within the ongoing section is straightforward to check by simple algebraic calculations. The Ansatz given at the end of Section~5 confirms the sharpness of both (\ref{3.2}) and (\ref{3.3}). However it realizes the asymptotics of the optimal constant in the rigidity estimate (\ref{1.4}) only for elliptic surfaces. Meanwhile, Ans\"atze realizing the upper bound part in (\ref{1.4}) for 
parabolic and elliptic surfaces $S$ are constructed in [\ref{bib:Gra.Har.3}] and [\ref{bib:Harutyunyan.2}] respectively. It is remarkable that both of them realize the asymptotics of constants in the main
estimate (\ref{3.2}), and at the same time do not realize the asymptotics of the best constant in (\ref{3.3}). This being said, while Korn's second inequality is classical, the interpolation inequality (\ref{3.2}) seems to be the "best" asymptotic Korn second-like inequality holding true and being sharp for all main domain-curvature situations. This suggests that when the interpolation inequality is utilized for thin domains to reduce the problem of estimating $\nabla \Bu$ in terms of $e(\Bu),$ to estimating just the field $\Bu$ in terms of $e(\Bu),$ the constants are not expected to suffer a loss in the asymptotics in $h.$ Therefore, this would provide a significant reduction of complexity of (\ref{1.4}).

\section{Inequalities in two dimensions}
\label{sec:4}
\setcounter{equation}{0}

First of all let us emphasize that in the ongoing section the constant $C>0$ may depend only on $p,$ $C_1$ and $C_2$ (introduced below in Lemma~\ref{lem:4.1}) unless otherwise specified. As pointed out in the introduction, we will follow the analysis in [\ref{bib:Harutyunyan.3}]. The following is a rigidity estimate for harmonic functions in two dimensional thin domains.
\begin{lemma}
\label{lem:4.1}
Assume $b>0,$ $h\in (0,b/2)$ and let the Lipschitz functions $\varphi_1,\varphi_2\colon[0,b]\to (0,\infty)$ and the constants $C_1,C_2>0$ be such that 
\begin{equation}
\label{4.1}
h\leq \varphi_i(y)\leq C_1h,\quad |\nabla\varphi_i(y)|\leq C_2h \quad\text{for all}\quad y\in [0,b], \ \ i=1,2.
\end{equation}
Denote the thin domain $D=\{(x,y)\in\mathbb R^2 \ : \ y\in (0,b), x\in (-\varphi_1(y),\varphi_2(y))\}$ that has a thickness of order $h.$ Then there exists a constant $C>0,$ such that any harmonic function $w\in C^2(\bar D)$ fulfills the inequality
\begin{equation}
\label{4.2}
\inf_{a\in\mathbb R}\|\partial_y w-a\|_{L^p(D)}\leq\frac{Cb}{h}\|\partial_x w\|_{L^p(D)}.
\end{equation}
 \end{lemma}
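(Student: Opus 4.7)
The plan is to combine a Poincar\'e inequality on the thin domain $D$ with an $L^p$ gradient estimate for harmonic functions on thin Lipschitz strips. The key observation is that, since $w$ is harmonic, both $\partial_x w$ and $\partial_y w$ are harmonic, and moreover the identity $\partial_{yy}w=-\partial_{xx}w$ gives $\nabla(\partial_y w)=(\partial_{xy}w,-\partial_{xx}w)$, so $|\nabla(\partial_y w)|=|\nabla(\partial_x w)|$ pointwise and consequently $\|\nabla(\partial_y w)\|_{L^p(D)}=\|\nabla(\partial_x w)\|_{L^p(D)}$.

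First I would apply the Poincar\'e inequality to $u:=\partial_y w$ on the Lipschitz domain $D$, whose diameter is of order $b$, to obtain
\begin{equation*}
\inf_{a\in\mathbb{R}}\|\partial_y w-a\|_{L^p(D)}\leq Cb\,\|\nabla(\partial_y w)\|_{L^p(D)}=Cb\,\|\nabla(\partial_x w)\|_{L^p(D)}.
\end{equation*}
The constant $Cb$ is uniform in $h$ because $D$ is bi-Lipschitz equivalent to the convex rectangle $(-h,h)\times(0,b)$ via the explicit map $(x',y)\mapsto(\alpha(y)x'+\beta(y),y)$, with $\alpha(y)=(\varphi_1+\varphi_2)/(2h)$ and $\beta(y)=(\varphi_2-\varphi_1)/2$; the bi-Lipschitz constants depend only on $C_1,C_2$ (not on $h$), so the Poincar\'e constant of $D$ is comparable to that of the rectangle, namely $C\cdot\mathrm{diam}\sim b$. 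Next I would invoke an $L^p$ Caccioppoli-type gradient estimate for the harmonic function $\partial_x w$ on the thin strip,
\begin{equation*}
\|\nabla(\partial_x w)\|_{L^p(D)}\leq\frac{C}{h}\|\partial_x w\|_{L^p(D)},
\end{equation*}
and composing the two inequalities produces (\ref{4.2}).

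The hard part will be the last estimate, an $L^p$ gradient bound for harmonic functions on a thin Lipschitz strip with constant $C/h$. For $p=2$ the classical Caccioppoli inequality, obtained by testing $\Delta u=0$ against $u\eta^2$ for a cutoff $\eta$ whose gradient is $\sim 1/h$, settles the matter in one line, but this is precisely the kind of self-duality argument that breaks down for $p\neq 2$, as emphasized in the introduction. My plan is to extend $\partial_x w$ across $\partial D$ by the Lipschitz reflections $x\mapsto 2\varphi_2(y)-x$ and $x\mapsto -2\varphi_1(y)-x$, obtaining a $W^{1,p}$ extension $\tilde u$ on a slightly thickened domain $\tilde D\supset D$ with $\|\tilde u\|_{L^p(\tilde D)}\leq C\|\partial_x w\|_{L^p(D)}$; the reflected $\tilde u$ is no longer harmonic, but transports $\Delta u=0$ into a uniformly elliptic divergence-form equation whose coefficients are $L^\infty$ and depend only on $C_1$ and $C_2$. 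Covering $D$ by balls of radius $r\sim h$ contained in $\tilde D$ and applying, on each ball, the standard interior bound $|\nabla\tilde u(x)|\le(C/r)\,\dashint_{B_r(x)}|\tilde u|$ (harmonic mean-value bound on balls not crossing $\partial D$, and the corresponding Calder\'on--Zygmund bound for the transformed elliptic equation on reflected pieces), then yields the desired $C/h$. The delicate bookkeeping is to match $h$-powers exactly through the reflection and covering so the final constant is $Cb/h$ and not worse, and to ensure that the contributions of the Lipschitz reflection factors $\varphi_i'=O(h)$ do not contaminate the constant.
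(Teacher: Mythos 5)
Your decomposition is genuinely different from the paper's: the paper proves an $L^p$ Korn first inequality on $D$ with constant $Cb/h$ (via Kohn--Vogelius slab localization and chaining) and applies it to the auxiliary field $\BW=(w,v)$ with $v(x,y)=-\int_0^x\partial_y w(t,y)\,dt+\int_0^y\partial_x w(0,z)\,dz$, for which harmonicity forces $e(\BW)=\partial_x w\cdot\BI$ while $\partial_y w$ sits entirely in the skew part. Your Poincar\'e step with constant $\sim b$ and the identity $|\nabla(\partial_y w)|=|\nabla(\partial_x w)|$ are both correct.

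The last step, however, is false. The bound
\[
\|\nabla(\partial_x w)\|_{L^p(D)}\le\frac{C}{h}\|\partial_x w\|_{L^p(D)}
\]
does not hold with any constant $C=C(p,C_1,C_2)$. Take $D=(-h,h)\times(0,1)$ (so $\varphi_1=\varphi_2=h$, $C_2=0$) and $w(x,y)=e^{ny}\cos(nx)$: then $\partial_x w=-ne^{ny}\sin(nx)$ while $|\nabla(\partial_x w)|=n^2e^{ny}$ pointwise, so $\|\nabla(\partial_x w)\|_{L^p(D)}\ge n\,\|\partial_x w\|_{L^p(D)}$, and for $nh\gg 1$ the two sides are in fact comparable to $n$, which exceeds $C/h$ as soon as $n\gg 1/h$. (The lemma itself is still true on this example since $\|\partial_y w\|_p\approx\|\partial_x w\|_p$; your chain would only yield a bound $\sim n$, not $\sim 1/h$.) The obstruction is structural: Caccioppoli is an interior estimate requiring harmonicity on a strictly larger set, and on a strip of width $\sim h$ the boundary layer where no interior estimate applies has measure comparable to all of $D$. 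The reflection idea cannot repair this --- besides the target estimate being false, the reflected extension solves only a divergence-form equation with $L^\infty$ coefficients, for which the pointwise mean-value gradient bound you invoke is unavailable and the $W^{1,p}$ Calder\'on--Zygmund theory is restricted to the Meyers range $(2-\delta,2+\delta)$. What does survive is exactly the weighted version the paper later uses as Lemma~\ref{lem:4.3}, $\int_D\rho^p|\nabla v|^p\le C_p\int_D|v|^p$; the boundary weight $\rho^p$ there is doing essential work and cannot be discarded.
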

 
 \begin{proof}
The proof is based on a use of Korn's first inequality in bulk and a clever choice of vector fields entering it, and follows the lines of the same lemma in [\ref{bib:Harutyunyan.3}], thus we will only present a sketch with little detail.\\
\textbf{Proposition.} \textit{There exists a constant $C>0,$ such that for any vector field $\BW=(u,v)\colon D\to\mathbb R^2,$ there exists a skew-symmetric matrix $\BA\in\mathbb R^2,$ such that}
\begin{equation}
\label{4.3}
\|\nabla\BW-\BA\|_{L^p(D)}\leq \frac{Cb}{h}\|e(\BW)\|_{L^p(D)}.
\end{equation}
By a variable change $(x,y)\to (bx',by')$ we can assume that $b=1.$ We adopt the localization argument of Kohn and Vogelius  [\ref{bib:Koh.Vog.}] that was also successfully employed in [\ref{bib:Fri.Jam.Mue.1},\ref{bib:Fri.Jam.Mue.2}]. Denote the positive whole number $N=\left[\frac{1}{h}\right]+1$ and consider the horizontal cuts $D_k$ of $D$ by the lines $y=\frac{k-1}{N}$ and $y=\frac{k+1}{N}$ for $k=1,2,\dots,N-1.$ As each of the rescaled domains $D_k$ is of order $h$ with a piecewise Lipschitz boundary, Korn's first inequality [\ref{bib:Kon.Ole.2}, Theorem~6], that is invariant under 
variable recaling $x\to \lambda x',$ gives 
\begin{equation}
\label{4.4}
\|\nabla \BW-\BA_k\|_{L^p(D_k)}\leq C\|e(\BW)\|_{L^p(D_k)},\quad k=1,2,\dots,N-1.
\end{equation}
for some skew-symmetric matrices $\BA_k\in \mathbb R^{2\times 2},$ $k=1,2,\dots, N-1.$ Assume $N>2$ otherwise done. For $1\leq k \leq N-2$ we have by the triangle inequality and by (\ref{4.4}) the estimate 
\begin{align*}
\|\BA_k-\BA_{k+1}\|_{L^p(D_k\cap D_{k+1})}&\leq \|\nabla \BW-\BA_k\|_{L^p(D_k)} +\|\nabla \BW-\BA_{k+1}\|_{L^p(D_{k+1})}\\
&\leq C\|e(\BW)\|_{L^p(D_k)}+C\|e(\BW)\|_{L^p(D_{k+1})},
\end{align*}
thus as the measure of $D_k\cap D_{k+1}$ is of order $h^2,$ we get 
\begin{equation}
\label{4.5}
|\BA_k-\BA_{k+1}|\leq \frac{C}{h^{2/p}}(\|e(\BW)\|_{L^p(D_k)}+\|e(\BW)\|_{L^p(D_{k+1})}), \quad k=1,2,\dots,N-2.
\end{equation}
The last inequality yields by the triangle inequality the estimate 
\begin{equation}
\label{4.6}
\|\BA_1-\BA_k\|_{L^p(D_k)} \leq C\|e(\BW)\|_{L^p(D)}, \quad k=1,2,\dots,N-1.
\end{equation}
Therefore choosing $\BA=\BA_1$ for (\ref{4.3}), by an application of the triangle inequality and owing back to (\ref{4.4}) and (\ref{4.6}), we arrive at the desired estimate (\ref{4.3}).\\
It remains to note that an application of the proposition for the specific vector field $\BW=(u,v),$ where
\begin{equation}
\label{4.7}
u(x,y)=w(x,y),\quad\text{ and}\quad v(x,y)=-\int_0^x \partial_y w(t,y)dt+\int_0^y \partial_x w(0,z)dz,
\end{equation}
with $a=a_{12}$ of the matrix $\BA$ in (\ref{4.3}), will derive (\ref{4.2}) from (\ref{4.3}). 

\end{proof}

The next lemma is the key estimate in the analysis.
\begin{lemma}
\label{lem:4.2}
Let $b>0$ and $h\in\left(0,b/8\right).$ Let $\varphi_1,\varphi_2,C_1,C_2$ and the domain $D$ be as in Lemma~\ref{lem:4.1}. Then there exists a constant $C>0,$ such that any harmonic function $w\in C^2(\bar D)$ fulfills the inequality
\begin{equation}
\label{4.8}
\|\partial_y w\|_{L^p(D)}^2\leq C\left(\frac{1}{h}\|w\|_{L^p(D)}\cdot\|\partial_x w\|_{L^p(D)}+\frac{1}{b^2}\|w\|_{L^p(D)}^2+\|\partial_x w\|_{L^p(D)}^2\right).
\end{equation}
\end{lemma}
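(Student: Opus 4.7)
My plan is to follow the $L^2$-proof of the analogous statement from [\ref{bib:Harutyunyan.3}], replacing the Hilbert self-duality with H\"older's inequality and absorption. Specifically, I take the dual exponent $q = p/(p-1)$ and the (nonlinear) test function $\phi := |\partial_y w|^{p-2}\partial_y w\in L^q(D)$. Writing $\int_D |\partial_y w|^p \, dA = \int_D \partial_y w\cdot\phi\, dA$ and integrating by parts in $y$, the harmonicity identity $\partial_y^2 w = -\partial_x^2 w$ yields
\begin{equation*}
\int_D |\partial_y w|^p \, dA = \int_{\partial D} w\,\phi\, n_y\, dS + (p-1)\int_D w\,|\partial_y w|^{p-2}\partial_x^2 w\, dA.
\end{equation*}
A second IBP in $x$ on the bulk integral produces $-(p-1)\int_D(\partial_x w)^2|\partial_y w|^{p-2}\, dA$ (which by H\"older is bounded by $\|\partial_x w\|_p^2\,\|\partial_y w\|_p^{p-2}$, yielding the $\|\partial_x w\|_p^2$ term in (\ref{4.8}) after division by $\|\partial_y w\|_p^{p-2}$), a new boundary integral $\int_{\partial D}w|\partial_y w|^{p-2}\partial_x w\, n_x\, dS$, and a nonlinear ``cross term" $-(p-1)\int_D w\,\partial_x w\,\partial_x |\partial_y w|^{p-2}\, dA$ coming from the differentiation of $|\partial_y w|^{p-2}$ in $x$.

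The cross term is the main deviation from the $L^2$ calculation (where it vanishes since $p-2=0$). I would reduce it via a further IBP in $x$, expressing it as $(p-1)\int_D[(\partial_x w)^2 + w\partial_x^2 w]|\partial_y w|^{p-2}\, dA$ plus yet another boundary integral; the second bulk piece re-invokes harmonicity and the first IBP identity to absorb a controllable fraction of $\int_D|\partial_y w|^p$. The cascade of boundary integrals is then estimated by splitting $\partial D$ into the thin faces $\{y = 0\}$, $\{y = b\}$ (where $n=\pm e_y$) and the long faces $\{x = \varphi_i(y)\}$ (where by (\ref{4.1}), $|n_y|=O(h)$). On the long faces I use the thin-domain $L^p$ trace inequality
\begin{equation*}
\|w\|_{L^p(\text{long face})}^p \leq C\bigl(h^{-1}\|w\|_{L^p(D)}^p + h^{p-1}\|\partial_x w\|_{L^p(D)}^p\bigr),
\end{equation*}
which, paired with H\"older against the $|\partial_y w|^{p-2}$ weight and Young's inequality, delivers the $h^{-1}\|w\|_p\|\partial_x w\|_p$ contribution. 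On the thin faces the analogous trace in $y$ produces the $b^{-2}\|w\|_p^2$ contribution. Any residual $\|\partial_y w\|_p$ that appears carries a small coefficient (a positive power of $h$) and is absorbed into the LHS.

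The main obstacle, as flagged by the author in the introduction, is precisely this cascade of boundary terms in the absence of $L^2$ self-duality. In $L^2$ one multiplies harmonic functions by themselves and integrates by parts cleanly, but here the test function $|\partial_y w|^{p-2}\partial_y w$ lies only in $L^q$, is not harmonic, and its $x$-derivative introduces second derivatives of $w$, forcing additional IBPs whose boundary contributions must be tracked and bounded term by term. The delicate point is to confirm that the resulting boundary estimates really do produce the three RHS quantities in (\ref{4.8}), rather than the weaker $(b/h)^2\|\partial_x w\|_p^2$ that would follow from directly squaring the bound in Lemma~\ref{lem:4.1}, and to ensure that all the absorbed $\|\partial_y w\|_p$-terms carry strictly positive powers of $h$ so that the absorption is legitimate for $h<h_0$.
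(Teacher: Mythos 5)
Your plan is precisely the approach the paper rules out. In the introduction the author states that ``a modification of test functions (to make them belong to the same space $L^p$) does not seem to work,'' and the remark at the start of Step~1 repeats that multiplying $\Delta w=0$ by a test function and integrating by parts ``of course does not work for $p\neq 2$.'' Two concrete obstructions arise in your calculation.

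First, the cascade of integrations by parts is circular rather than contractive. Write $I=\int_D|\partial_y w|^p$. After your first IBP in $y$, harmonicity, and IBP in $x$ you have
$I=[\mathrm{bdry}]-(p-1)\int_D(\partial_x w)^2|\partial_y w|^{p-2}-(p-1)\int_D w\,\partial_x w\,\partial_x(|\partial_y w|^{p-2}).$
Your proposed further IBP in $x$ on the cross term returns $(p-1)\int_D\bigl[(\partial_x w)^2+w\,\partial_x^2 w\bigr]|\partial_y w|^{p-2}$ plus boundary terms; the $(\partial_x w)^2$ piece cancels the term already present, and converting $w\,\partial_x^2 w\,|\partial_y w|^{p-2}$ back via $\partial_x^2 w=-\partial_y^2 w$ and one more IBP in $y$ yields exactly $\int_D|\partial_y w|^p$ again. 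The net identity is $I=I+[\mathrm{bdry}]$: the manoeuvre undoes itself and produces no estimate. There is no ``controllable fraction'' that gets absorbed. Second, for $1<p<2$ the integrand of the cross term need not even be in $L^1$: $\partial_x(|\partial_y w|^{p-2})=(p-2)|\partial_y w|^{p-4}\partial_y w\,\partial^2_{xy}w$ carries the weight $|\partial_y w|^{p-3}$, which is not locally integrable near a generic zero of $\partial_y w$. So the very object you integrate by parts may fail to make sense.

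The paper's proof takes an entirely different route, with no test function and no IBP against $\Delta w=0$. Step~1 fixes $x$ in the narrow strip $|x|<h/2$, applies a one-dimensional $L^p$ interpolation inequality in $y$ with a cutoff $\varphi$, namely $\bigl(\int_0^1|\partial_y(\varphi w)|^p\,dy\bigr)^2\leq C\int_0^1|\varphi w|^p\,dy\int_0^1|\partial_y^2(\varphi w)|^p\,dy$, and integrates in $x$. The resulting second-derivative term $J_1$ is controlled by converting $\partial_y^2 w=-\partial_x^2 w$ via harmonicity and then invoking the $L^p$ Caccioppoli inequality on a chain of interior discs of radius $\sim h$, giving $\int_{I_h\times(z,1-z)}|\partial_y^2 w|^p\lesssim h^{-p}\int_D|\partial_x w|^p$. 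The remaining terms $J_3,J_4$ (supported on small $y$-strips near the ends) are absorbed by a bootstrap that combines Lemma~\ref{lem:4.1} with an $\epsilon$-absorption, and the cutoff scale $z$ is then optimized. Steps~2 and~3 propagate the estimate from the central strip to all of $D$ using the Kondratiev--Oleinik weighted bound for harmonic functions (Lemma~\ref{lem:4.3}) and a one-dimensional Hardy-type inequality (Lemma~\ref{lem:4.4}). If you want to reproduce the lemma, you should abandon the duality-pairing idea and follow this localization--interpolation--Caccioppoli scheme.
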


\begin{proof}
In the proof of the foregoing lemma all $\|\cdot\|_p$ norms will be the norm $\|\cdot\|_{L^p(D)}$ unless specified otherwise. Again, noting that (\ref{4.8}) is invariant under variable scale 
$(x,y)\to \lambda (x',y'),$ we can assume $b=1.$ We divide the proof into three steps.\\

\textbf{Step 1. An estimate on the narrowed domain.} \textit{There exists a constant $C>0$ such that any harmonic function $w\in C^2(\bar D)$ fulfills the inequality:}
\begin{equation}
\label{4.9}
\|\partial_y w\|_{L^p\left((-h/2,h/2)\times(0,1)\right)}^2\leq C\left(\frac{1}{h}\|w\|_p\cdot\|\partial_x w\|_p+\|w\|_p^2+\|\partial_x w\|_p^2\right).
\end{equation}

\begin{proof}[Proof of Step 1.]

Let us first comment on the challenges of the case $p\neq 2$ in contrast to $p=2.$ Note that the proof technique in [15] of Step1 for $p=2$ was first multiplying the equality $\Delta w=0$ by a test function (basically $w$) that is in $L^2,$ and then integrating by parts. However, this of course does not work for $p\neq 2,$ thus we found the nontrivial replacement of that part by the interpolation inequality (\ref{4.10}). Next, there are several additional terms resulting from (\ref{4.10}) that one has to appropriately control (for instance $J_1$) which is another nontrivial task. The localization technique together with the classical Caciapolli inequality in $L^p$ come to rescue, see below.\\
For a fixed $z\in [h,1/4]$ let $\varphi(y)\colon[0,1]\to[0,1]$ be a smooth cutoff function supported on $(z,1-z)$ such that
$\varphi(y)=1$ for $y\in[2z,1-2z],$ $|\varphi'(y)|\leq \frac{2}{z}$ and $|\varphi''(y)|\leq \frac{2}{z^2}$ for $y\in[0,1].$ 
For any $x\in I_h=(-h/2,h/2)$ we have the interpolation inequality 
\begin{equation}
\label{4.10}
\left(\int_0^1 |\partial_y(\varphi w(x,y))|^p dy\right)^2\leq 
C\int_0^1 |\varphi w(x,y)|^p dy \int_0^1 |\partial^2_y(\varphi w(x,y))|^p dy,
\end{equation}
thus taking into account the choice of $\varphi$ and the inequality $|x_1+x_2+x_3|^p\leq 3^{p-1}(|x_1|^p+|x_2|^p+|x_3|^p),$ we get integrating (\ref{4.10}) in $x$ over the interval $I_h$ the estimate
\begin{equation}
\label{4.11}
\left(\int_{I_h\times(2z,1-2z)} |\partial_y w|^p \right)^2\leq C(J_1+J_2+J_3+J_4),
\end{equation}
where
\begin{align}
\label{4.12}
J_1&=\int_{I_h\times(z,1-z)} |w|^p \int_{I_h\times(z,1-z)} |\partial^2_y w|^p,\quad J_2=\left(\frac{1}{z^{p}} \int_{I_h\times(z,1-z)}|w|^p \right)^2,\\ \nonumber
J_3&=\frac{1}{z^{p}} \int_{I_h\times(z,1-z)} |w|^p \int_{I_h\times(z,2z)}|\partial_y w|^p,\quad 
J_4=\frac{1}{z^{p}} \int_{I_h\times(z,1-z)} |w|^p \int_{I_h\times(1-2z,1-z)} |\partial_yw|^p.
\end{align}
The trickiest part is estimating the summand $J_1$ in (\ref{4.11}). Note that the domain $D$ contains the rectangle $(-h,h)\times (0,1),$ thus since $z\geq h,$ 
we can cover the rectangle $I_h\times(z,1-z)$ by a sequence of balls $B_i$ (discs in this case) with radii $r=h/\sqrt{2}$ and centers $O_i=(0,y_i),$ where $y_i=z+2ir,$ for $i=1,2,\dots,k$ with 
$k=[(1-z)/2r]$ and an additional $(k+1)-$th ball $B_{k+1}=B_r(0,1-z)$ if necessary. For any point $(x,y)\in B_i,$ the distance of $(x,y)$ from $\partial D$ is a least $h(1-1/\sqrt{2}),$ thus by the Caccioppoli inequality for harmonic functions in $L^p$ we have in any ball $B_i$ the estimate
\begin{equation}
\label{4.13}
\int_{B_i}|\nabla (\partial_x w)|^p\leq \frac{C}{r^p}\int_{B_i} |\partial_x w|^p=\frac{C}{h^p}\int_{B_i} |\partial_x w|^p,
\end{equation}
summing which in $i$ we discover 
\begin{equation}
\label{4.14}
\int_{I_h\times(z,1-z)}|\nabla (\partial_x w)|^p\leq \frac{C}{h^p}\int_{D} |\partial_x w|^p.
\end{equation}
Next we have by the harmonicity of $w,$ that $\partial^2_{y}w=-\partial^2_xw,$ thus we have by (\ref{4.14}) the bounds
\begin{align*}
\int_{I_h\times(z,1-z)} |\partial^2_{y} w|^p&=\int_{I_h\times(z,1-z)} |\partial^2_x w|^p\\ 
&\leq \int_{D}|\nabla(\partial_x w)|^p\\
 & \leq \frac{C}{h^p}\int_D |\partial_{x}w|^p.
\end{align*}
From the last estimate we obtain
\begin{equation}
\label{4.15}
J_1\leq C\left(\frac{1}{h}\|w\|_p\|\partial_{x} w\|_p\right)^p.
\end{equation}
For $J_2$ we have the obvious inequality 
\begin{equation}
\label{4.16}
J_2\leq \frac{1}{z^{2p}} \|w\|_p^{2p}. 
\end{equation}
For $J_3$ and $J_4$ we have by the Cauchy inequality 
\begin{equation}
\label{4.17}
J_3+J_4\leq \frac{1}{2\epsilon^2z^{2p}}\|w\|_p^{2p}+\epsilon^2 \left(\int_{I_h\times(z,2z)}|\partial_y w|^p\right)^2
+\epsilon^2 \left(\int_{I_h\times(1-2z,1-z)}|\partial_y w|^p\right)^2
\end{equation}
where $\epsilon>0$ is a parameter yet to be chosen. Combining (\ref{4.11}) and (\ref{4.15})-(\ref{4.17}), we arrive at 
\begin{align}
\label{4.18}
\left(\int_{I_h\times(2z,1-2z)} |\partial_y w|^p \right)^2& \leq C\left(\frac{1}{h}\|w\|_p\|\partial_x w\|_p\right)^p+\frac{C}{z^{2p}}\left(1+\frac{1}{\epsilon^2}\right)\|w\|_p^{2p}\\ \nonumber
&+C\epsilon^2\left(\int_{I_h\times(z,2z)} |\partial_y w|^p \right)^2+C\epsilon^2\left(\int_{I_h\times(1-2z,1-z)} |\partial_y w|^p \right)^2
\end{align}
Once one has the bound (\ref{4.18}), which plays the role of (4.14) in [\ref{bib:Harutyunyan.3}], the rest of Step1 follows the appropriate lines in [\ref{bib:Harutyunyan.3}]. 
We present the details here for the convenience of the reader. Denote for the sake of brevity 
\begin{align*}
D_{z}^{bot}&=\{(x,y) \ : \ y\in(0,z), x\in (-\varphi_1(y),\varphi_2(y))\},\\
D_{z}^{top}&=\{(x,y) \ : \ y\in(1-z,1), x\in (-\varphi_1(y),\varphi_2(y))\}.
\end{align*}
Assume now $z\in (2h,1/4)$ and apply Lemma~\ref{lem:4.1} to $w$ in the domains $D_{2z}^{bot}$ and $D_{2z}^{top}$ get for some $a_1,a_2\in\mathbb R,$
\begin{equation}
\label{4.19}
\int_{D_{2z}^{bot}} |\partial_y w-a_1|^p\leq \frac{Cz^p}{h^p}\int_{D_{2z}^{bot}} |\partial_x w|^p,\qquad
\int_{D_{2z}^{top}} |\partial_y w-a_2|^2\leq \frac{Cz^p}{h^p}\int_{D_{2z}^{top}} |\partial_x w|^p.
\end{equation}
Next the estimate (\ref{4.20}) and the inequality $(s+t)^p\leq 2^{p-1}(s^p+t^p)$ for $s,t\geq 0$, that
\begin{equation}
\label{4.20}
 hz(a_1^p+a_2^p)\leq C\int_{I_h\times\left((z,2z)\cup(1-2z,1-z)\right)}|\partial_y w|^p+\frac{Cz^p}{h^p}\int_{D_{2z}^{bot}} |\partial_x w|^p+\frac{Cz^p}{h^p}\int_{D_{2z}^{top}}|\partial_x w|^p.
\end{equation}
Next we rewrite (\ref{4.18}) for $z/2\in (h,1/8)$ instead of $z$ to have
\begin{align}
\label{4.21}
\left(\int_{I_h\times(z,1-z)} |\partial_y w|^p \right)^2& \leq C\left(\frac{1}{h}\|w\|_p\|\partial_x w\|_p\right)^p+\frac{C}{z^{2p}}\left(1+\frac{1}{\epsilon^2}\right)\|w\|_p^{2p}\\ \nonumber
&+C\epsilon^2\left(\int_{I_h\times(z/2,z)} |\partial_y w|^p \right)^2+C\epsilon^2\left(\int_{I_h\times(1-z,1-z/2)} |\partial_y w|^p \right)^2.
\end{align}
Now the obvious bound 
$$\int_{I_h\times\left((z,2z)\cup(1-2z,1-z)\right)}|\partial_y w|^p\leq \int_{I_h\times(z,1-z)} |\partial_y w|^p $$
implies together with (\ref{4.20}) and (\ref{4.21}) the estimate 
\begin{align}
\label{4.22}
(hz(a_1^p+a_2^p))^2 &\leq  C\left(\frac{1}{h}\|w\|_p\|\partial_x w\|_p\right)^p+\frac{Cz^{2p}}{h^{2p}}\int_D |\partial_x w|^p+\frac{C}{z^{2p}}\left(1+\frac{1}{\epsilon^2}\right)\|w\|_p^{2p}\\ \nonumber
&+C\epsilon^2\left(\int_{I_h\times(z/2,z)} |\partial_y w|^p \right)^2+C\epsilon^2\left(\int_{I_h\times(1-z,1-z/2)} |\partial_y w|^p \right)^2.
\end{align}
Thus recalling the triangle inequality the estimate (\ref{4.22}) will give in combination with (\ref{4.19}) the bound 
\begin{equation}
\label{4.23}
(hz(a_1^p+a_2^p))^2[1-C\epsilon^2] \leq  C\left(\frac{1}{h}\|w\|_p\|\partial_x w\|_p\right)^p+\frac{Cz^{2p}}{h^{2p}}\|\partial_x w\|_p^{2p}+\frac{C}{z^{2p}}\left(1+\frac{1}{\epsilon^2}\right)\|w\|_p^{2p},
\end{equation}
for some $C>0.$ Choosing now  $\epsilon=\frac{1}{2\sqrt{C}}$ in (\ref{4.23}) we obtain
\begin{equation}
\label{4.24}
(hz(a_1^p+a_2^p))^2 \leq  C\left(\frac{1}{h}\|w\|_p\|\partial_x w\|_p\right)^p+\frac{Cz^{2p}}{h^{2p}}\|\partial_x w\|_p^{2p}+\frac{C}{z^{2p}}\|w\|_p^{2p}.
\end{equation}
Next we combine (\ref{4.24}), (\ref{4.19}), and the triangle inequality to get 
\begin{equation}
\label{4.25}
\left(\int_{D_{2z}^{bot}\cup D_{2z}^{top}}|\partial_y w|^p\right)^2\leq C\left(\frac{1}{h}\|w\|_p\|\partial_x w\|_p\right)^p+\frac{Cz^{2p}}{h^{2p}}\|\partial_x w\|_p^{2p}+\frac{C}{z^{2p}}\|w\|_p^{2p}.
\end{equation}
Finally putting together (\ref{4.25}) and (\ref{4.18}) we arrive at the key interior estimate
\begin{equation}
\label{4.26}
\left(\int_{I_h\times (0,1)}|\partial_y w|^p\right)^2\leq C\left(\frac{1}{h}\|w\|_p\|\partial_x w\|_p\right)^p+\frac{Cz^{2p}}{h^{2p}}\|\partial_x w\|_p^{2p}+\frac{C}{z^{2p}}\|w\|_p^{2p}.
\end{equation}
In order to derive (\ref{4.9}) from (\ref{4.26}) one needs to minimize the right hand side or (\ref{4.26}) in $z>0$ subject to the constraints $2h<z<1/4.$ This is straightforward and is done by 
considering the relative placement of the global minimum point $z_0=\left(\frac{h\|w\|}{\|\partial_x w\|}\right)^{1/2}$ to the endpoints $2h$ and $1/4.$ We omit the details here that can be found in 
[\ref{bib:Harutyunyan.3}]. 

\end{proof}

\textbf{Step 2. Estiates on $D_h^{top}$ and $D_h^{bot}$.} \textit{There exists a constant $C>0$ such that any harmonic function $w\in C^2(D)$ fulfills inequality:}
\begin{equation}
\label{4.27}
\|\partial_y  w\|_{L^p\left(D_h^{bot}\cup D_h^{top}\right)}^2\leq C\left(\frac{1}{h}\|w\|_p\cdot\|\partial_x w\|_p+\|w\|_p^2+\||\partial_x  w\|_p^2\right).
\end{equation}
\begin{proof}[Proof of Step 2.] 

The proof here follows from minimization of the right-hand-side of (\ref{4.25}) in $z\in (2h,1/4)$ as described above for (\ref{4.26}), as well as utilization of the inequality $z>h$ for the 
left-hand-side of (\ref{4.25}). 
\end{proof}

Denote next for $z\in (0,1/2)$ the middle part 
$$
D_{z}^{mid}=\{(x,y) \ : \ y\in(z,1-z), x\in (-\varphi_1(y),\varphi_2(y))\}.
$$

\textbf{Step 3. Estimate on $D_h^{mid}.$} \textit{There exists a constant $C>0$ such that any harmonic function $w\in C^2(D)$ fulfills inequality:}
\begin{equation}
\label{4.28}
\|\partial_y  w\|_{L^p(D_h^{mid})}^2\leq C\left(\frac{1}{h}\|w\|_p\cdot\|\partial_x w\|_p+\|w\|_p^2+\||\partial_x  w\|_p^2\right).
\end{equation}
\begin{proof}[Proof of Step 3.]

The proof of this step too follows the lines of [\ref{bib:Harutyunyan.3}], with the $L^p$ versions of Lemmas 4.3 and 4.4 in [\ref{bib:Harutyunyan.3}], that we formulate for the convenience of the reader. Recall Lemma~3 from [\ref{bib:Kon.Ole.1}]:
\begin{lemma}
\label{lem:4.3}
Assume $\Omega\subset \mathbb R^n$ is a bounded domain. Then there exists a constant $C_p>0$ depending only on $p,$ such that the estimate 
$$\int_{\Omega}\rho^p |\nabla v|^p dx\leq C_p\int_\Omega |v|^pdx$$ 
holds for any harmonic in $\Omega$ function $v,$ where $\rho(x)$ is the distance function from the boundary of $\Omega.$  
\end{lemma}

\begin{lemma}
\label{lem:4.4}
Assume $\lambda\in (0,1)$, $a<b$ and $f\colon[a,b]\to\mathbb R$ is absolutely continuous. Then the inequality holds:
$$
\int_{a+\lambda(b-a)}^b |f(t)|^pdt\leq \frac{2+\lambda}{\lambda}\int_{a}^{a+\lambda(b-a)}|f(t)|^pdt+2^p(p-1)^{p-1}\int_a^{b}(b-t)^p|f'(t)|^pdt.
$$
\end{lemma}

\begin{proof}
By change of variables we can assume without loss of generality that $a=0.$ For any $x\in [\lambda b,b]$ we have integrating by parts and by Young's ineaqulity 
\begin{align}
\label{4.29}
\int_{0}^x |f(t)|^pdt&\leq x|f(x)|^p+p\int_{0}^x t|f(t)|^{p-1}|f'(t)|dt\\ \nonumber
&\leq x|f(x)|^p+(p-1)\epsilon^{p/(p-1)}\int_{0}^x |f(t)|^{p}dt+\frac{1}{\epsilon^p} \int_{0}^x |tf'(t)|^{p}dt\\ \nonumber
&\leq x|f(x)|^p+\frac{1}{2} \int_{0}^b |f(t)|^{p}dt+(2(p-1))^{p-1}\int_{0}^x |tf'(t)|^{p}dt,
\end{align}
where we have chosen $\epsilon=(2(p-1))^{(1-p)/p}.$  Next, by the mean value theorem, the value of $x$ can be 
chosen so that $|f(x)|^p=\frac{1}{b(1-\lambda)}\int_{\lambda b}^b |f(t)|^pdt,$ thus we have 
\begin{equation}
\label{4.30}
x|f(x)|^p\leq \frac{1}{1-\lambda}\int_{\lambda b}^b |f(t)|^pdt.
\end{equation}
Putting together (\ref{4.29}) and (\ref{4.30}) and keeping in mind that $\lambda b\leq x\leq b,$ we get the estimate 

\begin{equation}
\label{4.31}
\int_{0}^{\lambda b} |f(t)|^pdt \leq  \frac{3-\lambda }{1-\lambda}\int_{\lambda b}^b |f(t)|^pdt+2^p(p-1)^{p-1}\int_{0}^b |tf'(t)|^{p}dt
\end{equation}
It remains to note that the change of variables $t=b-x$ and $\mu=1-\lambda$ in (\ref{4.31}) completes the proof of the lemma. 

\end{proof}

Note that we have from (\ref{4.9}) the obvious estimate 
\begin{equation}
\label{4.32}
\|\partial_y w\|_{L^p\left((-h/2,h/2)\times(h,1-h)\right)}^2\leq C\left(\frac{1}{h}\|w\|_p\cdot\|\partial_x w\|_p+\|w\|_p^2+\|\partial_x w\|_p^2\right),
\end{equation}
which we aim to extend in the horizontal direction to the entire set $D_h^{mid}$ by means of Lemmas 4.3 and 4.4. To that end we fix a point $y\in (h,1-h)$ and apply Lemma~\ref{lem:4.4} to the function $\partial_y w(x,y)$ on the segment $[0,\varphi_2(y)]$ for the value $\lambda=\frac{h}{2\varphi_2(y)}.$ Thus we have  
$$
\int_{\frac{h}{2}}^{\varphi_2(y)}|\partial_y  w(x,y)|^pdx\leq C\left(\int_{0}^{\frac{h}{2}}|\partial_y  w(x,y)|^pdx+C\int_{0}^{\varphi_2(y)}|(\varphi_2(y)-x)\partial^2_{xy} w(x,y)|^pdx \right)
$$
thus integrating in $y$ over $(h,1-h)$ we get
\begin{equation}
\label{4.33}
\int_{T}|\partial_y w|^p\leq  C\left(\int_{(0,h/2)\times(h,1-h)}|\partial_y w|^p+\int_{T}|(\varphi_2(y)-x)\partial^2_{xy}w|^p\right),
\end{equation}
where $T=\{(x,y) \ : \ y\in(h,1-h), x\in(0,\varphi_2(y))\}.$ Observe that $\partial_x w$ is harmonic in $D$ too. On the other hand due to the bounds (\ref{4.1}) and the definition of $T,$ 
we have $|\varphi_2(y)-x|\leq C\delta(x,y),$ where $\delta(x,y)$ is the distance function from the boundary of $D.$ Hence Lemma~\ref{lem:4.3} gives the bound
$$
\int_{T}|(\varphi_2(y)-x)\partial^2_{xy}w|^p\leq C \int_{D}| \delta \nabla (\partial_xw)|^p \leq C \int_{D}|\partial_xw|^p,
$$
which yields invoking (\ref{4.33}) the bound
\begin{equation}
\label{4.34}
\int_{T}|\partial_y w|^2\leq C\left(\int_{(0,h/2)\times(h,1-h)}|\partial_y w|^p+C\int_{D}|\partial_x w|^p\right).
\end{equation}
An analogous estimate for the left half of $D_{mid}^h$ is straightforward. Thus combining the two with (\ref{4.9}) we get (\ref{4.28}).
\end{proof}
In the final step we put together (\ref{4.27}) and (\ref{4.28}) to get (\ref{4.8}). 

\end{proof}

\section{Proof of the main results}
\label{sec:5}
\setcounter{equation}{0}

\begin{proof}[Proof of Theorem~\ref{th:3.1}]
As already mentioned we will prove the estimate (\ref{3.2}) first for $\BF$ in place of $\nabla \Bu$. The proof will follow the appropriate lines of [\ref{bib:Harutyunyan.4}], skipping the exactly identical calculations and presenting the modifications when necessary. Also, in what follows the norm $\|\cdot\|$ will be the $L^p$ norm $\|\cdot\|_{L^p(\Omega)}$ unless specified otherwise. We prove the estimate (\ref{3.2}) block by block by freezing each of the variables $t,$ $\Gth$ and $z$ and considering the appropriate inequality on the $t,\Gth,z=$const cross sections of $\Omega.$ The lemma below is a key estimate for the blocks $z=const$ and $\Gth=const.$ 
\begin{lemma}
\label{lem:5.1}
Let $h,b>0$ with $h\in (0,b/8)$ and assume the Lipschitz functions $\varphi_1,\varphi_2\colon[0,b]\to(0,\infty)$ satisfy the usual uniform conditions 
\begin{equation}
\label{5.1}
h\leq \varphi_i(y)\leq C_1h,\quad |\nabla\varphi_i(y)|\leq C_2h,\quad\text{for all}\quad y\in[0,b], i=1,2.
\end{equation}
Denote the two dimensional thin domain $D=\{(x,y) \ : \  y\in(0,b), x\in (-\varphi_1(y), \varphi_2(y))\}.$ Given a displacement field
$\BU=(u(x,y),v(x,y))\in W^{1,p}(D,\mathbb R^2),$ the vector fields $\BGa,\BGb\in W^{1,\infty}(D,\mathbb R^2)$ and a function $w\in W^{1,p}(D,\mathbb R),$ denote
\begin{equation}
\label{5.2}
\BM=
\begin{bmatrix}
u_{,x} & u_{,y}+\BGa\cdot\BU\\
v_{,x} & v_{,y}+\BGb\cdot\BU+w
\end{bmatrix}.
\end{equation}
Then for any $\epsilon \in (0,1)$ the following Korn-like interpolation inequality holds:
\begin{equation}
\label{5.3}
\|\BM\|_p^2\leq C\left(\frac{\|u\|_p\cdot \|\BM^{sym}\|_p}{h}+\|\BM^{sym}\|_p^2+\left(\frac{1}{\epsilon}+h^2\right)\|\BU\|_p^2+(\epsilon+h^2)(\|w\|_p^2+\|\partial_x w\|_p^2)\right),
\end{equation}
for all $h\in(0,h_0).$ Here $\BM^{sym}=\frac{1}{2}(\BM+\BM^T)$ is the symmetric part of $\BM,$ the constants $C$ and $h_0$ depend only on the quantities $p$, $b,$ $\|\BGa\|_{W^{1,\infty}},$ $\|\BGb\|_{W^{1,\infty}},$ and the norm $\|\cdot\|_p$ is the $L^p$ norm $\|\cdot\|_{L^p(D)}.$
\end{lemma}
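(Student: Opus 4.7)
The plan is to reduce the statement to a bound on $\|u_{,y}\|_p$ alone and then to obtain that bound by approximating $u$ by a harmonic function in $D$ and invoking Lemma~\ref{lem:4.2}. Using $(\BM^{sym})_{12}=\tfrac12(u_{,y}+\BGa\cdot\BU+v_{,x})$ to solve for $v_{,x}$, one finds that the anti-symmetric part $\BM-\BM^{sym}$ is pointwise controlled by $u_{,y}$, $\BGa\cdot\BU$ and $(\BM^{sym})_{12}$, giving
\[
\|\BM\|_p^2 \leq C\bigl(\|\BM^{sym}\|_p^2 + \|u_{,y}\|_p^2 + \|\BU\|_p^2\bigr).
\]
It therefore suffices to estimate $\|u_{,y}\|_p^2$ by the right-hand side of (\ref{5.3}).

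To estimate $\|u_{,y}\|_p$, I would write $u=\tilde u+r$ where $\tilde u$ is the harmonic extension of $u|_{\partial D}$ into $D$ and $r\in W_0^{1,p}(D)$. A direct computation based on $u_{,x}=e_{11}(\BU)$ and $u_{,yy}=2\partial_y e_{12}(\BU)-\partial_x e_{22}(\BU)$ gives
\[
\Delta u \;=\; \mathrm{div}\bigl(e_{11}(\BU)-e_{22}(\BU),\;2e_{12}(\BU)\bigr),
\]
and rewriting $e(\BU)$ through $\BM^{sym}$ and the perturbations $\BGa\cdot\BU$, $\BGb\cdot\BU$, $w$ puts $\Delta u$ in divergence form with an $L^p$-source of size $\|\BM^{sym}\|_p+\|\BU\|_p+\|w\|_p$. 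The standard $L^p$ Calder\'on--Zygmund estimate for the Dirichlet Laplacian on the Lipschitz domain $D$ (whose Lipschitz character remains uniform in $h$, e.g.\ after even reflection across the thin faces) then yields
\[
\|\nabla r\|_p \leq C\bigl(\|\BM^{sym}\|_p + \|\BU\|_p + \|w\|_p\bigr),
\]
together with the Poincar\'e bound $\|r\|_p\leq Ch\,\|\nabla r\|_p$ in the thin direction. Applying Lemma~\ref{lem:4.2} to $\tilde u$, substituting $\tilde u=u-r$ and $\|\partial_x\tilde u\|_p\leq\|\BM^{sym}\|_p+\|\nabla r\|_p$, and combining with $\|\partial_y u\|_p\leq\|\partial_y\tilde u\|_p+\|\nabla r\|_p$ produces
\[
\|\partial_y u\|_p^2 \leq C\Bigl(\tfrac1h\|u\|_p\bigl(\|\BM^{sym}\|_p+\|\BU\|_p+\|w\|_p\bigr)+\|\BM^{sym}\|_p^2+\|\BU\|_p^2+\|w\|_p^2\Bigr),
\]
and the cross-term $\tfrac1h\|u\|_p\|w\|_p$ is split by Young's inequality, producing the trade-off $(1/\epsilon+h^2)\|\BU\|_p^2$ versus $(\epsilon+h^2)\|w\|_p^2$ appearing on the right-hand side of (\ref{5.3}).

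The main obstacle, as the authors flag for Lemma~\ref{lem:4.2} and its analogues, is replacing the $L^2$ self-duality step ``multiply by a test function and integrate by parts'' used in [\ref{bib:Harutyunyan.3}]. My plan bypasses it by coupling an $L^p$ elliptic estimate for $r$ with the new harmonic interpolation Lemma~\ref{lem:4.2}. The quantitatively delicate point that remains is producing the $\|\partial_x w\|_p^2$ summand on the right-hand side of (\ref{5.3}): it does not arise from the divergence-form representation of $\Delta u$, which yields only $\|w\|_p$. I expect this to require a parallel application of Lemma~\ref{lem:4.2} to a harmonic auxiliary function built from $w$ (for instance the harmonic extension of $w$), whose $\|\partial_x(\cdot)\|_p$ on the right-hand side of Lemma~\ref{lem:4.2} is what captures $\|\partial_x w\|_p$ with the small factor $\epsilon+h^2$ after a final Young-type split.
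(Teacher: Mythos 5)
Your general architecture is the same as the paper's: write $u=\tilde u+r$ with $\tilde u$ the harmonic extension of $u|_{\partial D}$ and $r\in W_0^{1,p}(D)$, control $r$ via an $L^p$ elliptic estimate plus Poincar\'e in the thin direction, and apply Lemma~\ref{lem:4.2} to $\tilde u$. The reduction to bounding $\|u_{,y}\|_p$ via the off-diagonal entries of $\BM^{sym}$ is also essentially what the paper does. However, there is a quantitative gap at the heart of your argument that prevents it from closing.

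The bound you record,
\begin{equation*}
\|\nabla r\|_p \leq C\bigl(\|\BM^{sym}\|_p+\|\BU\|_p+\|w\|_p\bigr),
\end{equation*}
is genuinely too weak. When you feed this into the $\tfrac1h\|\tilde u\|_p\|\partial_x\tilde u\|_p$ term coming from Lemma~\ref{lem:4.2}, the product $\tfrac1h\|u\|_p\|\nabla r\|_p$ contributes a term of size $\tfrac1h\|u\|_p\bigl(\|\BU\|_p+\|w\|_p\bigr)$, and splitting it by Young gives $\tfrac{1}{\epsilon h^2}\|u\|_p^2 + \epsilon\bigl(\|\BU\|_p^2+\|w\|_p^2\bigr)$. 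The factor $\tfrac{1}{\epsilon h^2}$ does \emph{not} match the $(\tfrac1\epsilon+h^2)\|\BU\|_p^2$ on the right-hand side of (\ref{5.3}), which must hold for every $\epsilon\in(0,1)$ with $C,h_0$ independent of $\epsilon$ and $h$; the lemma is indeed used downstream with $\epsilon$ a fixed small constant independent of $h$. What is really needed is
\begin{equation*}
\|\nabla r\|_p \leq C\Bigl(\|\BM^{sym}\|_p+h\bigl(\|\partial_y f\|_p+\|\partial_x g\|_p\bigr)\Bigr),
\end{equation*}
with $f=\BGa\cdot\BU$, $g=\BGb\cdot\BU+w$, so that the dangerous piece of $\tfrac1h\|u\|_p\|\nabla r\|_p$ becomes $\|u\|_p\bigl(\|\partial_y f\|_p+\|\partial_x g\|_p\bigr)$, which is $O(1)$ in $h$ and can be absorbed. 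The paper produces this extra factor of $h$ by splitting the source $\Delta u=\mathrm{div}\BV+(\partial_x g-\partial_y f)$: the $\BM^{sym}$-part $\BV$ feeds the Meyers $L^p$ estimate directly, while the curl-like part $\partial_x g-\partial_y f$ is handled through a second Dirichlet problem for an auxiliary $\tilde w$ with $\Delta\tilde w=\partial_x g-\partial_y f$ and $\tilde w=0$ on $\partial D$. Combining the thin-direction Poincar\'e inequality $\|\tilde w\|_p\leq Ch\|\nabla\tilde w\|_p$ with the interpolation inequality $\|\nabla\tilde w\|_p^2\leq C\|\tilde w\|_p\|\Delta\tilde w\|_p$ yields $\|\nabla\tilde w\|_p\leq Ch\|\Delta\tilde w\|_p$, which is exactly where the crucial $h$-gain comes from. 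Your direct Calder\'on--Zygmund treatment loses that gain because it bounds the whole source by its $L^p$ norm without exploiting that the troublesome part is a pure first-order derivative paired with the thin geometry.

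Finally, the worry you raise at the end about producing $\|\partial_x w\|_p$ is misplaced: once the $h$-weighted bound on $\|\nabla r\|_p$ is in place, $\|\partial_x w\|_p$ appears automatically through $\|\partial_x g\|_p$, so no separate harmonic auxiliary for $w$ is needed.
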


\begin{proof}
Let us point out that the proof for the case $p=2$ does not go through in its exact form in [\ref{bib:Harutyunyan.3}] and one needs to make some modifications, thus we present a complete proof here. First of all, we can assume by density that $\BU\in C^2(\bar D).$ For functions $f,g\in W^{1,p}(D,\mathbb R)$ denote by $\BM_{f,g}$ the matrix
\begin{equation}
\label{5.4}
\BM_{f,g}=
\begin{bmatrix}
u_{,x} & u_{,y}+f\\
v_{,x} & v_{,y}+g
\end{bmatrix}.
\end{equation}
Assume $\tilde u(x,y)$ is the harmonic part of $u$ in $D,$ i.e., it is the unique solution of the Dirichlet boundary value problem
\begin{equation}
\label{5.5}
\begin{cases}
\Delta \tilde u(x,y)=0, & (x,y)\in D\\
\tilde u(x,y)=u(x,y), & (x,y)\in \partial D.
\end{cases}
\end{equation}
As $u-\tilde u$ vanishes on the lateral boundary of $D,$ then we have by the Poincar\'e inequality in the horizontal direction, that
\begin{equation}
\label{5.6}
\|u-\tilde u\|_p\leq Ch\|\nabla(u-\tilde u)\|_p.
\end{equation}
A simple calculation gives the identity 
\begin{equation}
\label{5.7}
\Delta (u-\tilde u)=\Delta u=\frac{\partial }{\partial x} ((\BM_{f,g}^{sym})_{11}-(\BM_{f,g}^{sym})_{22})+2\frac{\partial }{\partial y}(\BM_{f,g}^{sym})_{12}+
\frac{\partial g}{\partial x}-\frac{\partial f}{\partial y}.
\end{equation}
Assume next the function $w\in W^{1,p}(D,\mathbb R)$ is the unique solution of the Dirichlet boundary value problem
\begin{equation}
\label{5.8}
\begin{cases}
\Delta w(x,y)=\frac{\partial g}{\partial x}-\frac{\partial f}{\partial y}, & (x,y)\in D\\
w(x,y)=0, & (x,y)\in \partial D.
\end{cases}
\end{equation}
Introducing the vector field 
$$\BV=\left ((\BM_{f,g}^{sym})_{11}-(\BM_{f,g}^{sym})_{22}+\frac{\partial w}{\partial x} ,2(\BM_{f,g}^{sym})_{12}+\frac{\partial w}{\partial y} \right),$$
we get from (\ref{5.7}) and (\ref{5.8}) that $u-\tilde u$ solves the problem
\begin{equation}
\label{5.9}
\begin{cases}
\Delta (u-\tilde u)=\mathrm{div}\BV, & (x,y)\in D\\
u-\tilde u=0, & (x,y)\in \partial D,
\end{cases}
\end{equation}
thus we get by the classical $L^p$ estimates for linear elliptic $PDEs$ [\ref{bib:Meyers}],
\begin{equation}
\label{5.10}
\|\nabla(u-\tilde u)\|_p\leq C\|\BV\|_p\leq C(\|\BM_{f,g}^{sym}\|_p+\|\nabla w\|_p).
\end{equation}
Next we have by the Poincar\'e inequality in the $x$ direction, that
\begin{equation}
\label{5.11}
\|w\|_p \leq Ch\|\nabla w\|_p,
\end{equation}
thus an application of the interpolation inequality $\|\nabla w\|_p^2 \leq C\|w\|_p\|\Delta w\|_p$
together with (\ref{5.8}) derives from (\ref{5.11}) the estimate
\begin{equation}
\label{5.12}
\|\nabla w\|_p \leq Ch\|\Delta w\|_p\leq Ch(\|\partial_x g\|_p+\|\partial_y f\|_p).
\end{equation}
Combining (\ref{5.6}), (\ref{5.10}) and (\ref{5.12}) we obtain the key estimates
\begin{align}
\label{5.13}
\|\nabla(u-\tilde u)\|_p&\leq C\left[\|\BM_{f,g}^{sym}\|_p+h(\|\partial_x g\|_p+\|\partial_y f\|_p)\right],\\ \nonumber
\|u-\tilde u\|_p&\leq Ch\left[\|\BM_{f,g}^{sym}\|_p+h(\|\partial_x g\|_p+\|\partial_y f\|_p)\right].
\end{align}
By the harmonicity of $\tilde u$ we can apply Lemma~\ref{lem:4.2} to it, hence doing so we have due to the bounds (\ref{5.13}),
\begin{align}
\label{5.14}
\|\partial_y u+f\|_p^2&\leq 4(\|\partial_y u-\partial_y\tilde u\|_p^2+\|\partial_y \tilde u\|_p^2+\|f\|_p^2)\\ \nonumber
&\leq C\left(\|\nabla(u-\tilde u)\|_p^2+\frac{1}{h}\|\tilde u\|_p\cdot\|\partial_x \tilde u\|_p+\|\tilde u\|_p^2+\|\partial_x \tilde u\|_p^2+\|f\|_p^2\right)\\ \nonumber
&\leq C\left(\|\nabla(u-\tilde u)\|_p^2+\frac{1}{h}(\|u\|_p+\|u-\tilde u\|_p)(\|\BM_{f,g}^{sym}\|_p+\|\nabla(u-\tilde u)\|_p)\right)\\ \nonumber
&+C\left(\|u\|_p^2+\|u-\tilde u\|_p^2+\|\BM_{f,g}^{sym}\|_p^2+\|\nabla(u-\tilde u)\|_p^2+\|f\|_p^2\right)\\ \nonumber
&\leq C\left(\frac{1}{h}\|u\|_p\cdot\|\BM_{f,g}^{sym}\|_p+\left(1+\frac{1}{\epsilon}\right)\|u\|_p^2+(\epsilon+h^2)(\|\partial_y f\|_p^2+\|\partial_x g\|_p^2)\right)\\ \nonumber
&+C(\|\BM_{f,g}^{sym}\|_p^2+\|f\|_p^2).
\end{align}
Recall that in our case we have $f=\BGa\cdot\BU$ and $g=\BGb\cdot\BU+w,$ thus the obvious bounds hold:
\begin{align}
\label{5.15}
\|\partial_y f\|_p+\|\partial_x g\|_p&\leq C\|\BU\|_{W^{1,p}(D)}+\|\partial_x w\|_p\\ \nonumber
&\leq C(\|\BM_{f,g}^{sym}\|_p+\|\BU\|_p+\|w\|_p+\|\partial_x w\|_p),\\ \nonumber
\|f\|_p& \leq C\|\BU\|_p.
\end{align}
Consequently, (\ref{5.3}) follows from (\ref{5.14}) and (\ref{5.15}) by several applications of the triangle inequality. 
\end{proof}

\textbf{The block $\Gth=const$.} We aim to prove that for any $\epsilon>0$ and for small enough $h$ the estimate holds:
\begin{equation}
\label{5.16}
\|F_{13}\|_p^2+\|F_{31}\|_p^2\leq C\left(\frac{\|u_t\|_p\cdot\|\BF^{sym}\|_p}{h}+\|\BF^{sym}\|_p^2+\frac{1}{\epsilon}\|\Bu\|_p^2+\epsilon \|F_{12}\|_p^2\right),
\end{equation}
where $\|\cdot\|_p=\|\cdot\|_{L^p(\Omega)}.$ 

\begin{proof}
The proof is achieved by an application of Lemma~\ref{lem:5.1} to a suitably chosen set of vector fields $\BU,\BGa,\BGb$ and a vector $w.$ 
Recall that the simplified gradient matrix $\BF$ is given by omitting the small terms in the denominators of the second 
and third columns of $\nabla \Bu$ in (\ref{2.2}), i.e., 
\begin{equation}
\label{5.17}
\BF=
\begin{bmatrix}
  u_{t,t} & \dfrac{u_{t,\Gth}-A_{\Gth}\Gk_{\Gth}u_{\Gth}}{A_{\Gth}} &
\dfrac{u_{t,z}-A_{z}\Gk_{z}u_{z}}{A_{z}}\\[3ex]
u_{\Gth,t}  &
\dfrac{A_{z}u_{\Gth,\Gth}+A_{z}A_{\Gth}\Gk_{\Gth}u_{t}+A_{\Gth,z}u_{z}}{A_{z}A_{\Gth}} &
\dfrac{A_{\Gth}u_{\Gth,z}-A_{z,\Gth}u_{z}}{A_{z}A_{\Gth}}\\[3ex]
u_{ z,t}  & \dfrac{A_{z}u_{z,\Gth}-A_{\Gth,z}u_{\Gth}}{A_{z}A_{\Gth}} &
\dfrac{A_{\Gth}u_{z,z}+A_{z}A_{\Gth}\Gk_{z}u_{t}+A_{z,\Gth}u_{\Gth}}{A_{z}A_{\Gth}}
\end{bmatrix},
\end{equation}
Hence, an applicable choice turns out to be 
$\BU=(u_t,A_zu_z)$ with the vector fields $\BGa=(0,-\Gk_z),$ $\BGb=(A_z^2\Gk_z,-\frac{A_{z,z}}{A_z})$ and the function $w=\frac{A_zA_{z,\Gth}}{A_\Gth}u_\Gth$ in the variables $t$ and $z.$ Indeed, we can calculate owing back to formula (\ref{5.2}) that 
\begin{align}
\label{5.18}
M_{11}&=u_{t,t}\\ \nonumber
M_{12}&=u_{t,z}+(0,-\Gk_z) \cdot (u_t,A_zu_z)\\ \nonumber
&=u_{t,z}-A_z\Gk_zu_z\\ \nonumber
M_{21}&=A_zu_{z,t}\\ \nonumber
M_{22}&=(A_zu_z)_{,z}+(A_z^2\Gk_z,-\frac{A_{z,z}}{A_z})\cdot (u_t,A_zu_z)+\frac{A_zA_{z,\Gth}}{A_\Gth}u_\Gth\\ \nonumber
&=A_zu_{z,z}+A_z^2\Gk_zu_t+\frac{A_zA_{z,\Gth}}{A_\Gth}u_\Gth,\\ \nonumber
\partial_t w&=\frac{A_zA_{z,\Gth}}{A_\Gth}u_{\Gth,t}\\ \nonumber
\end{align}
thus taking into account formula (\ref{5.17}) we have 
\begin{align}
\label{5.19}
\BM^{sym}_{11}&=\BF^{sym}_{11},\quad \BM^{sym}_{12}=A_z\BF^{sym}_{12},\quad \BM^{sym}_{22}=A_z^2\BF^{sym}_{33},\\ \nonumber
|w|&\leq C|\Bu|,\quad |\partial_t w|\leq C(|F_{12}|+|\Bu|).
\end{align}
The rest follows from integrating the obtained estimates (from Lemma~\ref{lem:5.1}) in $\Gth\in(0,\omega)$ and applying H\"oldre inequality to the product terms.  
\end{proof}

\textbf{The block $z=const$.} The role of the variables $\Gth$ and $z$ is the completely the same, thus we have an analogous estimate
\begin{equation}
\label{5.20}
\|F_{12}\|_p^2+\|F_{21}\|_p^2\leq C\left(\frac{\|u_t\|_p\cdot\|\BF^{sym}\|_p}{h}+\|\BF^{sym}\|_p^2+\frac{1}{\epsilon}\|\Bu\|_p^2+\epsilon \|F_{31}\|_p^2\right).
\end{equation}
Consequently adding (\ref{5.16}) and (\ref{5.20}) and choosing the parameter $\epsilon>0$ small enough (independent of $h$) we discover
\begin{equation}
\label{5.21}
\|F_{12}\|_p^2+\|F_{21}\|_p^2+\|F_{13}\|_p^2+\|F_{31}\|_p^2\leq C\left(\frac{\|u_t\|_p\cdot\|\BF^{sym}\|_p}{h}+\|\BF^{sym}\|_p^2+\|\Bu\|_p^2\right).
\end{equation}
\textbf{The block $t=const$.} As in [\ref{bib:Harutyunyan.3}], we will prove an estimate on the shell
\begin{equation}
\label{5.22}
\Omega^h=\{(t,\Gth,z)\in\Omega \ : \ t\in (-h,h)\}
\end{equation}
and then extend it to $\Omega$ in the normal direction by means of a localization argument. Namely, we prove that
\begin{equation}
\label{5.23}
\|F_{23}\|_{L^p(\Omega^h)}+\|F_{32}\|_{L^p(\Omega^h)}\leq C(\|\Bu\|_{L^p(\Omega^h)}+\|\BF^{sym}\|_{L^p(\Omega^h)}).
\end{equation}

\begin{proof}

\begin{lemma}
\label{lem:5.2}
Let $E=\{(\Gth,z)\ : \ \Gth\in (0,\omega), \ z\in (z^1(\Gth),z^2(\Gth))\}$  and assume $\varphi=\varphi(\Gth,z)\in C^1(E,\mathbb R)$ satisfies the conditions
\begin{equation}
\label{5.24}
0<c_1\leq \varphi(\Gth,z)\leq c_2,\quad |\nabla \varphi(\Gth,z)|\leq c_3,\quad\text{for all}\quad (\Gth,z)\in E.
\end{equation}
For a displacement $\BU=(u,v)\in W^{1,p}(E,\mathbb R^2)$ set
\begin{equation}
\label{5.25} \BM_\varphi=
\begin{bmatrix}
u_{,\Gth} & \varphi u_{,z}\\
v_{,\Gth} & \varphi v_{,z}
\end{bmatrix}.
\end{equation}
Then there exists a constant $c>0,$ depending only on parameters of the domain mid-surface $S$ and $c_i,\ i=1,2,3,$ such that
\begin{equation}
\label{5.26}
\|\BM_\varphi\|_{L^p(E)}\leq c(\|\BM_\varphi^{sym}\|_{L^p(E)}+\|\BU\|_{L^p(E)}).
\end{equation}
\end{lemma}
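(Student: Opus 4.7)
My approach is to reduce this weighted Korn-type estimate to the classical $L^p$ Korn second inequality on the Lipschitz domain $E$, via a freezing-of-coefficients and partition-of-unity argument. The starting observation is that $\BM_\varphi=\nabla\BU\cdot\mathrm{diag}(1,\varphi)$, and since $c_1\leq\varphi\leq c_2$, the quantities $\|\BM_\varphi\|_{L^p(E)}$ and $\|\nabla\BU\|_{L^p(E)}$ are equivalent up to constants depending only on $c_1$, $c_2$. It therefore suffices to prove $\|\nabla\BU\|_p \leq c\bigl(\|\BM_\varphi^{sym}\|_p+\|\BU\|_p\bigr)$.

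First I would handle the constant-coefficient case: if $\varphi\equiv\varphi_0$ is constant, the linear rescaling $\zeta:=z/\varphi_0$ sends $\BM_{\varphi_0}$ exactly to the gradient of $\hat\BU(\Gth,\zeta):=\BU(\Gth,\varphi_0\zeta)$ in the new coordinates, so a direct application of the classical $L^p$ Korn second inequality (valid on any bounded Lipschitz planar domain) to $\hat\BU$ on the rescaled image of $E$ yields $\|\BM_{\varphi_0}\|_p\leq c_0\bigl(\|\BM_{\varphi_0}^{sym}\|_p+\|\BU\|_p\bigr)$, with $c_0$ depending only on $p$, $\varphi_0\in[c_1,c_2]$, and the Lipschitz geometry of $E$.

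For general Lipschitz $\varphi$ the pointwise bounds $|\BM_\varphi-\BM_{\varphi_0}|\leq|\varphi-\varphi_0|\,|\nabla\BU|$ and $|\BM_\varphi^{sym}-\BM_{\varphi_0}^{sym}|\leq|\varphi-\varphi_0|\,|\nabla\BU|$ reduce the variable case to the constant one, provided that $\|\varphi-\varphi_0\|_\infty$ is small enough to absorb $C\|\varphi-\varphi_0\|_\infty\|\BM_\varphi\|_p$ into the left-hand side. I would cover $E$ by a finite family of sub-patches $\{E_i\}$ whose diameters are chosen (using $|\nabla\varphi|\leq c_3$) so that $\varphi$ varies on each $E_i$ by less than the required absorbing threshold, and apply the previous step on each $E_i$ with reference value $\varphi_i:=\varphi(x_i)$ for some $x_i\in E_i$. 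Summing through a partition of unity subordinate to $\{E_i\}$ then gives the global estimate, with the cutoff commutators producing only $\|\BU\|_p$-type contributions already present on the right-hand side.

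The main obstacle I anticipate is the careful bookkeeping of the partition-of-unity gluing: one must verify that every overlap and cutoff error gets absorbed into $\|\BU\|_p$ rather than reappearing as a term in $\|\nabla\BU\|_p$. A cleaner, localization-free alternative is to apply the Ne\v{c}as lemma in $L^p$ directly to the scalar skew component $\omega:=\tfrac12(v_{,\Gth}-\varphi u_{,z})=(\BM_\varphi^{skew})_{21}$: the mixed-partials identities $\partial_\Gth v_{,z}=\partial_z v_{,\Gth}$ and $\partial_\Gth u_{,z}=\partial_z u_{,\Gth}$ combined with the defining relations $u_{,\Gth}=\BM_\varphi^{sym}_{11}$, $\varphi v_{,z}=\BM_\varphi^{sym}_{22}$, $v_{,\Gth}+\varphi u_{,z}=2\BM_\varphi^{sym}_{12}$ express the components of $\nabla\omega$ as a sum in $W^{-1,p}(E)$ of first derivatives of entries of $\BM_\varphi^{sym}$ and $W^{1,\infty}$-coefficient multiples of $\BU$-derivatives, together with a zero-order term $\varphi_{,\Gth}\omega/\varphi$ that is absorbable once $\omega\in L^p$ is established. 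Since $\omega$ is itself a first derivative of $\BU$, one has $\|\omega\|_{W^{-1,p}(E)}\leq C\|\BU\|_p$, and the Ne\v{c}as inequality $\|\omega\|_p\leq C(\|\omega\|_{W^{-1,p}}+\|\nabla\omega\|_{W^{-1,p}})$ then produces the bound $\|\omega\|_p\leq C(\|\BM_\varphi^{sym}\|_p+\|\BU\|_p)$, which closes the estimate in view of the trivially controlled diagonal entries.
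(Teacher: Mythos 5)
Your proposal is correct, but it takes a genuinely different and considerably longer route than the paper. The paper proves Lemma~\ref{lem:5.2} in one step by applying the classical $L^p$ Korn second inequality $\|\nabla\BW\|_{L^p(E)}\le C(\|e(\BW)\|_{L^p(E)}+\|\BW\|_{L^p(E)})$ of Kondratiev and Oleinik directly to the auxiliary field $\BW=(u,\,v/\varphi)$ on $E$. A short computation gives $(\nabla\BW)_{11}=(\BM_\varphi)_{11}$, $\varphi(\nabla\BW)_{12}=(\BM_\varphi)_{12}$, $\varphi(\nabla\BW)_{21}=(\BM_\varphi)_{21}-v\varphi_{,\Gth}/\varphi$, $\varphi^2(\nabla\BW)_{22}=(\BM_\varphi)_{22}-v\varphi_{,z}$, whence $\|\BM_\varphi\|_p\le C(\|\nabla\BW\|_p+\|\BU\|_p)$, $\|e(\BW)\|_p\le C(\|\BM_\varphi^{sym}\|_p+\|\BU\|_p)$ and $\|\BW\|_p\le C\|\BU\|_p$, and (\ref{5.26}) follows by chaining, with constants depending only on $c_1,c_2,c_3$ and $E$. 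The key difference is that the paper's substitution divides the \emph{second component of the field} by $\varphi$, which disposes of the variable weight globally, whereas your rescaling $\hat\BU(\Gth,\zeta)=\BU(\Gth,\varphi_0\zeta)$ only works for constant $\varphi_0$ and therefore forces the freezing/partition-of-unity apparatus, together with exactly the bookkeeping you yourself flag (uniformity of the local Korn constants on the sub-patches $E_i$, patch-by-patch absorption of $\|\varphi-\varphi_i\|_{\infty}\|\BM_\varphi\|_{L^p(E_i)}$). Your alternative via the Ne\v{c}as inequality applied to $\omega=(\BM_\varphi^{\mathrm{skew}})_{21}$ is also correct and cleaner than the freezing route --- the zero-order term $\varphi_{,\Gth}\omega/\varphi$ causes no circularity since it is already $W^{-1,p}$-controlled by $\|\BU\|_p$ --- but it effectively reproves Korn's inequality from scratch in the weighted setting, whereas the paper's substitution is shorter and lets one cite the $L^p$ Korn second inequality as a black box.
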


\begin{proof}
The proof follows from Korn's second inequality [\ref{bib:Kon.Ole.2}, Theorem~2], which reads as 
$\|\nabla \BW\|_{L^p(E)}\leq C(\|e(\BW)\|_{L^p(E)}+\|\BW\|_{L^p(E)}),$ applied to the auxiliary 
field $\BW=(u,\frac{1}{\varphi}v)\colon E\to\mathbb R^2.$ The details are omitted here.
\end{proof}

Estimate (\ref{5.23}) now follows from applying Lemma~\ref{lem:5.2} to the displacement field $\BU=(u_\Gth,u_z),$ with the choice 
$\varphi(\Gth,z)=\frac{A_\Gth}{A_z}.$ We refer the reader to [\ref{bib:Harutyunyan.3}] for details. 

\end{proof}

Next we combine the estimates (\ref{5.21}) and (\ref{5.23}) to get the bound 
\begin{equation}
\label{5.27}
\|\BF\|_{L^p(\Omega^h)}^2 \leq C\left(\frac{\|u_t\|_{L^p(\Omega)}\cdot\|\BF^{sym}\|_{L^p(\Omega)}}{h}+\|\BF^{sym}\|_{L^p(\Omega)}^2+\|\Bu\|_{L^p(\Omega)}^2\right).
\end{equation}
As already pointed out, it is easy to see, that by an application of the obvious bounds
$\|\BF-\nabla\Bu\|\leq h\|\nabla\Bu\|$ and $\|\BF^{sym}-e(\Bu)\|\leq h\|\nabla\Bu\|,$ we obtain from (\ref{5.27}) the partial estimate 
\begin{equation}
\label{5.28}
\|\nabla\Bu\|_{L^p(\Omega^h)}^2 \leq C\left(\frac{\|u_t\|_{L^p(\Omega)}\cdot\|e(\Bu)\|_{L^p(\Omega)}}{h}
+\|e(\Bu)\|_{L^p(\Omega)}^2+\|\Bu\|_{L^p(\Omega)}^2\right),
\end{equation}
for small enough $h.$ The $L^2$ version of the following lemma has been proven in [\ref{bib:Harutyunyan.3}, Lemma~5.2], the $L^p$ analog is completely analogous, the point is that Korn's first inequality holds in $L^p$ too, thus we will skip the proof. 
\begin{lemma}
\label{lem:5.3}
Assume $D_1\subset D_2\subset \mathbb R^n$ are open bounded connected Lipschitz domains. By Korn's first inequality, there exist constants $K_1$ and $K_2$ such that for any vector field $\BU\in W^{1,p}(D_2,\mathbb R^n),$ there exist skew-symmetric matrices $\BA_1,\BA_2\in\mathbb M^{n\times n},$ such that 
\begin{equation}
\label{5.29}
\|\nabla\BU-\BA_1\|_{L^p(D_1)} \leq K_1\|e(\BU)\|_{L^p(D_1)},\quad \|\nabla\BU-\BA_2\|_{L^p(D_2)} \leq K_2\|e(\BU)\|_{L^p(D_2)}.
\end{equation}
The assertion is that there exists a constant $C>0$ depending only on the quantities $K_1,K_2$ and $\frac{|D_2|}{|D_1|},$ such that for any 
vector field $\BU\in W^{1,p}(D_2, \mathbb R^n)$ one has
\begin{equation}
\label{5.30}
\|\nabla\BU\|_{L^p(D_2)} \leq C(\|\nabla\BU\|_{L^p(D_1)}+\|e(\BU)\|_{L^p(D_2)}).
\end{equation}
\end{lemma}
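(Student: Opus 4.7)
The proof is essentially a bookkeeping exercise with Korn's first inequality; the key observation is that the skew-symmetric matrix appearing in the inequality is a \emph{constant}, so control of it on any subdomain of positive measure controls it on the whole domain.

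The plan is to use only the inequality with constant $K_2$ on the large domain $D_2$ (the appearance of $K_1$ in the dependence of the final constant is harmless, since one can always absorb it into the maximum). First, apply Korn's first inequality on $D_2$ to obtain a skew-symmetric matrix $\BA_2$ such that $\|\nabla\BU-\BA_2\|_{L^p(D_2)}\leq K_2\|e(\BU)\|_{L^p(D_2)}$. By the triangle inequality one immediately gets
\begin{equation*}
\|\nabla\BU\|_{L^p(D_2)}\leq K_2\|e(\BU)\|_{L^p(D_2)}+\|\BA_2\|_{L^p(D_2)},
\end{equation*}
so the task reduces to controlling $\|\BA_2\|_{L^p(D_2)}$ by the right-hand side of (\ref{5.30}).

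The central step is the observation that, since $\BA_2$ is constant on $\mathbb R^n$, one has $\|\BA_2\|_{L^p(D)}=|\BA_2|\,|D|^{1/p}$ for any measurable $D$. Restricting the Korn bound on $D_2$ to the smaller set $D_1$ gives
\begin{equation*}
\|\BA_2\|_{L^p(D_1)}\leq \|\nabla\BU-\BA_2\|_{L^p(D_1)}+\|\nabla\BU\|_{L^p(D_1)}\leq K_2\|e(\BU)\|_{L^p(D_2)}+\|\nabla\BU\|_{L^p(D_1)},
\end{equation*}
and hence $|\BA_2|\leq |D_1|^{-1/p}\bigl(K_2\|e(\BU)\|_{L^p(D_2)}+\|\nabla\BU\|_{L^p(D_1)}\bigr)$. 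Multiplying by $|D_2|^{1/p}$ then yields
\begin{equation*}
\|\BA_2\|_{L^p(D_2)}\leq \left(\frac{|D_2|}{|D_1|}\right)^{1/p}\!\!\bigl(K_2\|e(\BU)\|_{L^p(D_2)}+\|\nabla\BU\|_{L^p(D_1)}\bigr).
\end{equation*}

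Plugging this into the triangle-inequality bound for $\|\nabla\BU\|_{L^p(D_2)}$ produces (\ref{5.30}) with a constant depending only on $K_2$ and $|D_2|/|D_1|$ (and hence also on $K_1$ if one wishes, by taking $C=\max(K_1,K_2)\bigl(1+(|D_2|/|D_1|)^{1/p}\bigr)$). There is no real obstacle here: the argument is the standard ``constant skew-symmetric matrix'' trick and requires nothing beyond Korn's first inequality on $D_2$, the triangle inequality, and the fact that $L^p$-norms of constants scale as the $1/p$-th power of the measure. The only mild point of care is writing the constant dependence to match what is stated in the lemma.
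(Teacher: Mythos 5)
Your proof is correct, and since the paper itself omits the argument (deferring to the $L^2$ version in the cited reference and noting only that Korn's first inequality holds in $L^p$), there is nothing in the text to compare it against line by line. Your route is the standard one: exploit the fact that the skew-symmetric matrix $\BA_2$ is a \emph{constant}, so $\|\BA_2\|_{L^p(D)}=|\BA_2|\,|D|^{1/p}$, restrict the Korn bound from $D_2$ to $D_1$, solve for $|\BA_2|$, and scale back up by $(|D_2|/|D_1|)^{1/p}$. Every step (triangle inequality, monotonicity of the $L^p$ norm under restriction to $D_1\subset D_2$, positivity of $|D_1|$) is sound.

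One small but genuine improvement worth flagging: you only ever invoke Korn on the larger domain $D_2$, so your final constant depends on $K_2$ and $|D_2|/|D_1|$ alone (and implicitly on $p$, which is fixed throughout the paper), not on $K_1$. The lemma as stated allows dependence on $K_1$ — and the more common textbook route, mirroring the argument used in the proof of Lemma~4.1 in this paper, introduces both $\BA_1$ and $\BA_2$ and controls $|\BA_1-\BA_2|$ on the overlap $D_1$, which genuinely needs both constants. Your version is a mild strengthening; you should simply not present the ``absorb $K_1$ into a maximum'' remark as though $K_1$ were ever used, since it never enters your estimate. Everything else is fine.
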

The idea is now to divide $\Omega$ into small parts with size of order $h$ and extend the existing local estimate on all smaller parts in the normal to $S$ direction to the bigger (but still of order $h$) parts containing it. Assume now $\bar\Bu=(\bar u_1,\bar u_2, \bar u_3)$ is $\Bu$ in Cartesian coordinates $\Bx=(x_1,x_2,x_3)$ and denote by $\bar\nabla$ the cartesian gradient. we divide the domains $\Omega$ and $\Omega^h$ into small pieces of order $h.$ Namely for $N=[\frac{1}{h}]+1$ denote 
\begin{align}
\label{5.31}
&\Omega_{i,j}=\left\{(t,\Gth,z)\in\Omega \ : \ \Gth\in \left(\frac{i}{N},\frac{i+1}{N}\right), z\in \left(\frac{Nz_1+j(z_2-z_1)}{N}, \frac{Nz_1+(j+1)(z_2-z_1)}{N}\right)\right\},\\ \nonumber
&\Omega_{i,j}^h=\{(t,\Gth,z)\in\Omega_{i,j} \ : \ t\in (-h,h)\} ,\quad   i,j=0,1,\dots,N-1.
\end{align}
Note that as Korn's first inequality is invariant under the variable change $x\to\lambda x,$ then so is the constant $C$ in (\ref{5.30}). Second, the domains $D_1=\Omega_{i,j}$ and $D_2=\Omega_{i,j}^h$ have uniform Lipschitz constants depending only on the parameters mid-surface $S$ and the functions $g_1^h, g_2^h,$ are of order $h,$ thus by the above remark and Lemma~\ref{lem:5.2} we have the estimate
\begin{equation}
\label{5.32}
\|\bar\nabla\bar\Bu\|_{L^p(\Omega_{ij})} \leq C(\|\bar\nabla\bar\Bu\|_{L^p(\Omega_{ij}^h)}+\|e(\bar\Bu)\|_{L^p(\Omega_{ij})}),\quad   i,j=0,1,\dots,N-1,
\end{equation} 
summing which over $i,j=0,1,\dots,N-1$ we arrive at 
\begin{equation}
\label{5.33}
\|\bar\nabla\bar\Bu\|_{L^p(\Omega)} \leq C(\|\bar\nabla\bar\Bu\|_{L^p(\Omega^h)}+\|e(\bar\Bu)\|_{L^p(\Omega)}).
\end{equation} 
It remains to notice that (\ref{5.28}) and the analogous estimate with $\Bu$ and $\nabla$ replaced by $\bar\Bu$ and $\bar\nabla$ respectively are equivalent, thus (\ref{5.28}) and (\ref{5.33}) yield (\ref{3.2}). The Ansatz proving the sharpness of (\ref{3.2}) and (\ref{3.3}) has been constructed in [\ref{bib:Harutyunyan.1}] and reads as:
\begin{equation}
\label{6.1}
\begin{cases}
u_t=W(\frac{\Gth}{\sqrt{h}},z)\\
u_\Gth=-\frac{t\cdot W_{,\Gth}\left(\frac{\Gth}{\sqrt h},z\right)}{A_\Gth{\sqrt h}}\\
u_z=-\frac{t\cdot W_{,z}\left(\frac{\Gth}{\sqrt h},z\right)}{A_z},
\end{cases}
\end{equation}
where $W(\xi,\eta)\colon\mathbb R^2\to\mathbb R$ is a fixed smooth compactly supported function. The calculations that verify that the displacement field 
$\Bu=(u_y, u_\Gth, u_z)$ realizes the asymptotics of $h$ in both (\ref{3.2}) and (\ref{3.3}) are left to the reader. 
\end{proof}

\section*{Acknowledgements}
This material is based upon work supported by the National Science Foundation under Grants No. DMS-1814361.

\end{document}